\documentclass{amsart}

\usepackage{amssymb}

\newcommand{\Q}{\mathbb{Q}}
\newcommand{\C}{\mathbb{C}}

\newcommand{\R}{\mathbb{R}}
\newcommand{\N}{\mathbb{N}}

\newcommand{\cat}{^\frown}
\newcommand{\dom}{\operatorname{dom}}
\newcommand{\ran}{\operatorname{ran}}

\newcommand{\norm}[1]{\left\| #1 \right\|}
\newcommand{\supp}{\operatorname{supp}}

\theoremstyle{theorem}
\newtheorem{theorem}{Theorem}[section]
\newtheorem{lemma}[theorem]{Lemma}

\theoremstyle{definition}
\newtheorem{definition}[theorem]{Definition}

\theoremstyle{theorem}

\theoremstyle{theorem}
\newtheorem{proposition}[theorem]{Proposition}

\theoremstyle{theorem}

\theoremstyle{theorem}

\theoremstyle{definition}

\theoremstyle{theorem}

\numberwithin{equation}{section}

\begin{document}
\title{Computable copies of $\ell^p$}
\author{Timothy H. McNicholl}
\address{Department of Mathematics\\
Iowa State University\\
Ames, Iowa 50011}
\email{mcnichol@iastate.edu}
\thanks{Subsection \ref{subsec:proof.thm.main.2} previously appeared in the conference proceedings of CiE 2015 \cite{McNicholl.2015}.  The author's participation in CiE 2015 was supported by a Simons Foundation Collaboration Grant for Math\-e\-ma\-ti\-cians}
\subjclass[2010]{Primary: 03D78, 03D45.  Secondary: 46B25}

\begin{abstract}
Suppose $p$ is a computable real so that $p \geq 1$.  It is shown that the halting set can compute a surjective linear isometry between any two computable copies of $\ell^p$.  It is also shown that this result is optimal in that when $p \neq 2$ there are two computable copies of $\ell^p$ with the property that any oracle that computes a linear isometry of one onto the other must also compute the halting set.  Thus, $\ell^p$ is $\Delta_2^0$-categorical and is computably categorical if and only if $p = 2$.  It is also demonstrated that there is a computably categorical Banach space that is not a Hilbert space.  These results hold in both the real and complex case.
\end{abstract}
\maketitle

\section{Introduction}\label{sec:intro}

We start by considering the very general question ``Given two computable and linearly isometric Banach spaces, how hard is it to compute a linear isometry from one onto the other?"  (Roughly speaking, a Banach space is computable if there are algorithms that compute its scalar multiplication, vector addition, and norm.)  We specialize this question to the case of Banach spaces that are linearly isometric to $\ell^p$ where $p \geq 1$ is a computable real (i.e. a real whose decimal expansion is computable).  Our first result is that this is no harder than computing membership in the halting set.   Namely, we show that when $p$ is a computable real so that $p \geq 1$, the halting set is capable of computing a surjective linear isometry between any two computable copies of $\ell^p$.  Our second result is that this problem is not easier than the halting set.  Namely,
when $p$ is a computable real so that $p \geq 1$ and $p \neq 2$, there are two computable copies of $\ell^p$ so that any oracle that computes a surjective linear isometry from one onto the other must also compute the halting set.   It is already known that any two computable copies of $\ell^2$ are computably linearly isometric \cite{Pour-El.Richards.1989}.  This is essentially due to the fact that $\ell^2$ is a Hilbert space and mirrors the classical fact that any two infinite-dimensional separable Hilbert spaces are linearly isometric \cite{Halmos.1998}.

The first of our two results is based on a sharpening of an inequality due to J. Lamperti which we prove in Section \ref{sec:classical}. 
In the main, our second result was previously shown for $p = 1$ by Pour-El and Richards \cite{Pour-El.Richards.1989}.  Their proof rests on an observation about the extreme points of the closed unit ball of $\ell^1$ that does not generalize to $\ell^p$ when $p > 1$.  The proof presented here uses the characterization of the linear isometries of $L^p$ spaces due to S. Banach and J. Lamperti \cite{Banach.1987}, \cite{Fleming.Jamison.2003}, \cite{Lamperti.1958}.  

Our findings can be recast in the setting of computable categoricity.  A mathematical structure is \emph{computably categorical} if any two of its computable copies are isomorphic via a computable map.  A structure is \emph{$\Delta^0_2$-categorical} if the halting set can compute an isomorphism between any two of its computable copies \cite{Ash.Knight.2000}, \cite{Fokina.Harizanov.Melnikov.2014}.  Our results can be interpreted in the setting of computable categoricity by replacing `isomorphism' with `surjective linear isometry'; i.e. when $p$ is a computable real so that $p \geq 1$, $\ell^p$ is $\Delta_2^0$-categorical, and $\ell^p$ is computably categorical if and only if $p = 2$.   The latter resolves a question posed by A.G. Melnikov in 2013 \cite{Melnikov.2013}.

Although our theorems are proven for the complex version of $\ell^p$, they also hold for the real version of $\ell^p$.

The paper is organized as follows.  Section \ref{sec:background} covers background and preliminaries from functional analysis and computable analysis.  Section \ref{sec:overview} gives an overview of the proof that $\ell^p$ is $\Delta_2^0$-categorical.  The remainder of the work is then divided into two parts each corresponding to a different mathematical universe: the classical world (Section \ref{sec:classical}), wherein we have full access to all the concepts, principles, and methods of classical mathematics, and the computable world (Section \ref{sec:computable}) wherein we can only see approximations of classical objects and can only access computable operations on these approximations.  In Section \ref{sec:additional}, we use the methods developed in the previous sections to provide simple proofs that there is a computably categorical Banach space that is not a Hilbert space and that $\ell^p$ has a computable copy if and only if $p$ is computable.  Section \ref{sec:conclusion} presents concluding remarks. 

\section{Background and preliminaries}\label{sec:background}

\subsection{Background and preliminaries from functional analysis}\label{subsec:background.fa}

Throughout this paper, it is assumed that all Banach spaces are Banach spaces over the field of complex numbers $\C$.

We begin with some notation and terminology.  Let $\mathcal{B} = (V, \cdot, +, \norm{\ })$ be a Banach space.  By a \emph{subspace} of $\mathcal{B}$ we will always mean a linear subspace of $\mathcal{B}$ that is topologically closed.  When $S \subseteq V$ and $F \subseteq \C$, we let $\mathcal{L}_F(S)$ denote the set of all linear combinations of vectors in $S$ whose coefficients lie in $F$; i.e.
\[
\mathcal{L}_F(S) = \left\{ \sum_{j = 0}^M \alpha_j v_j\ :\ M \in \N\ \wedge\ \alpha_0, \ldots, \alpha_M \in F\ \wedge\ v_0, \ldots, v_M \in S\right\}.
\]
The \emph{subspace generated by $S$} is the closure of the linear span of $S$; we denote this by $\langle S \rangle$.  
We say that $G \subseteq V$ is a \emph{generating set} for $\mathcal{B}$ if 
it generates all of $\mathcal{B}$; i.e. $V = \langle G \rangle$.  For example, let $e_n = \chi_{\{n\}}$ for all $n \in \N$ (where $\chi_A$ denotes the characteristic function of $A$).
Then, $E := \{e_0, e_1, \ldots\}$ is a generating set for $\ell^p$ which we refer to as the \emph{standard generating set} for $\ell^p$.  Also, the set of all $f_n(x) = x^n$ for $n \in \N$ is a generating set for $C[0,1]$.

A map between two Banach spaces is \emph{linear} if it preserves scalar multiplication and vector addition; it is an \emph{isometry} (or is \emph{isometric}) if it preserves the metric induced by the norm; i.e. $\norm{T(x) - T(y)} = \norm{x - y}$.  Thus, every isometry is injective.  An \emph{endomorphism} of a Banach space is a linear (but not necessarily isometric) map of the space into itself.

When $p$ is a positive number, $\ell^p$ denotes the space of all functions $f : \N \rightarrow \C$ so that 
\[
\sum_{n = 0}^\infty |f(n)|^p < \infty.
\]
$\ell^p$ is a vector space over $\C$ with the usual scalar multiplication and vector addition.  When $p \geq 1$ it is a Banach space under the norm defined by 
\[
\norm{f}_p = \left( \sum_{n = 0}^\infty |f(n)|^p \right)^{1/p}.
\]  
It is often convenient to view $\ell^p$ as $L^p(\mu)$ where $\mu$ is the counting measure on $\N$.

When $f \in \ell^p$, the \emph{support} of $f$ is the set of all $t \in \N$ so that $f(t) \neq 0$; we denote this set by $\supp(f)$.  If $f_0, f_1, \ldots$ are vectors in $\ell^p$ so that $\supp(f_m) \cap \supp(f_n) = \emptyset$ whenever $m \neq n$, then we say that $f_0, f_1, \ldots$ are \emph{disjointly supported}.  Note that if $f,g \in \ell^p$ are disjointly 
supported then $\norm{f +g }_p^p = \norm{f}_p^p + \norm{g}_p^p$.

We now describe a simple numerical test for disjointness of support when $p \neq 2$.  
When $z, w \in \C$ let:
\[
\sigma_1(z, w) = |2 (|z|^p + |w|^p) - (|z - w|^p + |z + w|^p)|
\]
In 1958, J. Lamperti proved that $\sigma_1(z,w) = 0$ iff $zw = 0$ and that the sign of $2 (|z|^p + |w|^p) - (|z - w|^p + |z + w|^p)$ depends only on $p$.  Define $\sigma_1(f,g)$ to be $\sum_n \sigma_1(f(n), g(n))$.  Thus, 
$\sigma_1(f,g) = |2(\norm{f}^p + \norm{g}^p) - (\norm{f - g}^p + \norm{f + g}^p)|$ and $\sigma_1(f,g) = 0$ if and only if $f,g$ are disjointly supported.  Note also that $\sigma_1$ is invariant under linear isometries.  Thus, 
every isometric endomorphism of $\ell^p$ preserves the `disjoint support' relation.   That is, if $T : \ell^p \rightarrow \ell^p$ is a linear isometry, then $T(f)$ and $T(g)$ are disjointly supported whenever $f,g \in \ell^p$ are disjointly supported.  

When $f,g \in \ell^p$, write $f \preceq g$ if $f(n) = 0$ whenever 
$g(n) \neq f(n)$.  It follows that $\preceq$ is a partial order of $\ell^p$.  Note that the atoms of this partial order are the nonzero scalar multiples of the $e_n$'s.  Note also that $f \preceq g$ if and only if $g - f$ and $f$ are disjointly supported.  Thus, $\preceq$ is preserved by isometric endomorphisms of $\ell^p$.

The proof that $\ell^p$ is not computably categorical when $p \neq 2$ is based on the following.

\begin{theorem}[Banach-Lamperti]\label{thm:classification}
Suppose $1 \leq p < \infty$ and $p \neq 2$. Suppose $T$ is an endomorphism of $\ell^p$.  Then, $T$ is a surjective isometric endomorphism of $\ell^p$ if and only if there are unimodular constants $\lambda_0, \lambda_1, \ldots$ and a permutation of $\N$, $\phi$, so that $T(e_n) = \lambda_n e_{\phi(n)}$ for all $n$.
\end{theorem}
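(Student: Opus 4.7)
My plan is to leverage the machinery on $\preceq$ already developed in the excerpt. The backward direction ($\Leftarrow$) is routine: defining $T(e_n) = \lambda_n e_{\phi(n)}$ and extending linearly and continuously, a direct computation using $|\lambda_n|=1$ and the bijectivity of $\phi$ shows $\norm{T(f)}_p = \norm{f}_p$ for all $f\in \ell^p$, and surjectivity is witnessed by writing down the preimage $\sum_n b_{\phi(n)}\overline{\lambda_n}\, e_n$ of an arbitrary $g = \sum_m b_m e_m$.

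For the forward direction ($\Rightarrow$), suppose $T$ is a surjective linear isometry. Since $T^{-1}$ is also a surjective linear isometry, and isometric endomorphisms preserve $\preceq$ (as noted in the excerpt), both $T$ and $T^{-1}$ preserve $\preceq$, so $T$ is an order isomorphism of $(\ell^p,\preceq)$. The crux is that any order isomorphism sends atoms to atoms, and the atoms of $\preceq$ are precisely the nonzero scalar multiples of the $e_n$. Hence for each $n$ there exist $\lambda_n \in \C\setminus\{0\}$ and $\phi(n) \in \N$ with $T(e_n) = \lambda_n e_{\phi(n)}$, and the isometry condition together with $\norm{e_n}_p = 1$ forces $|\lambda_n|=1$. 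To see $\phi$ is a permutation, injectivity follows because $\phi(m)=\phi(n)$ with $m\neq n$ would put the nonzero vector $\lambda_m^{-1}e_m - \lambda_n^{-1}e_n$ in the kernel of $T$; surjectivity follows by running the same atom argument on $T^{-1}$ to get $T^{-1}(e_m) = \mu e_k$ for some nonzero $\mu$ and some $k$, whence $e_m = \mu \lambda_k e_{\phi(k)}$ and so $\phi(k) = m$.

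There is no genuine obstacle to this proof once the preliminary material in the excerpt is in hand: the hypothesis $p \neq 2$ enters only through Lamperti's functional $\sigma_1$, whose nondegeneracy is what makes $\preceq$ a $T$-invariant structure in the first place. The one point that might warrant a brief supporting argument is the characterization of atoms of $\preceq$ used above: a nonzero $g = \sum a_n e_n$ whose support has size at least two admits, for any $m \in \supp(g)$, the proper nonzero predecessor $a_m e_m \preceq g$, while the vectors $c e_k$ with $c\neq 0$ have only $0$ and themselves below them in $\preceq$.
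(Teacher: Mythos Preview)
Your proof is correct. Note, however, that the paper does not give its own proof of this theorem: it is stated with attribution, and the reader is referred to Lamperti's 1958 paper and to Theorem~3.2.5 of Fleming--Jamison for the general $L^p$ result from which this follows. Your argument is therefore more self-contained than what the paper itself provides. You extract the forward direction directly from the $\sigma_1$ and $\preceq$ machinery set up in the preliminaries, using only that a surjective linear isometry and its inverse both preserve $\preceq$, so that $T$ is an order automorphism of $(\ell^p,\preceq)$ and must carry atoms to atoms. This is essentially Lamperti's approach specialized to $\ell^p$ with counting measure, where the atoms of $\preceq$ play the role that characteristic functions of measure atoms play in the general $L^p$ setting. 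The backward direction is routine, as you say, and your verification that $\phi$ is a bijection and that the atoms are exactly the nonzero scalar multiples of the $e_n$ is clean.
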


In his seminal text on linear operators, S. Banach stated Theorem \ref{thm:classification} for the case of $\ell^p$ spaces over the reals \cite{Banach.1987}.  He also stated a classification of the linear isometries of $L^p[0,1]$ in the real case.  Banach's proofs of these claims were sketchy and did not easily generalize to the complex case.  In 1958, J. Lamperti rigorously proved a generalization of  Banach's claims to real and complex $L^p$-spaces of $\sigma$-finite measures \cite{Lamperti.1958}.  Theorem \ref{thm:classification} follows from J.  Lamperti's work as it appears in Theorem 3.2.5 of \cite{Fleming.Jamison.2003}.   Note that Theorem \ref{thm:classification} fails when $p = 2$.  For, $\ell^2$ is a Hilbert space.  So, if $\{f_0, f_1, \ldots\}$ is any orthonormal basis for $\ell^2$, then there is a unique surjective linear isometry $T$ of $\ell^2$ so that $T(e_n) = f_n$ for all $n$.

\subsection{Background and preliminaries from computable analysis}\label{subsec:background.ca}

We assume the reader is familiar with the basic notation and terminology of computability theory as expounded in \cite{Cooper.2004}.  We cover here the basic notions from computable analysis necessary to understand the results herein.  A more expansive treatment can be found in  \cite{Pour-El.Richards.1989}, \cite{Weihrauch.2000}.  

Suppose $\mathcal{B}$ is a Banach space and $F = \{f_0, f_1, \ldots \} \subseteq \mathcal{B}$ is a generating set for $\mathcal{B}$.  We say that $F$ is an \textit{effective generating set} for $\mathcal{B}$ if there is an algorithm that, given any $f \in \mathcal{L}_{\mathbb{Q}(i)}(F)$ and a nonnegative integer $k$ as input computes a rational number $q$ so that $| \|f\| - q| < 2^{-k}$; less formally, the map $f \in \mathcal{L}_{\mathbb{Q}(i)}(F) \mapsto \|f\|$ is computable.  For example, $\{1, i\}$ is an effective generating set for $\C$, and the standard generating set of $\ell^p$ is an effective generating set for $\ell^p$.  On the other hand, if $|\zeta| = 1$, then 
$\zeta E := \{\zeta e_0, \zeta e_1, \ldots \}$ is also an effective generating set for $\ell^p$, even if $\zeta$ is incomputable.  We designate $\{1,i\}$ and $E$ as the default effective generating sets for $\C$ and $\ell^p$ respectively; i.e. when discussing computability on these spaces without mention of an effective generating set it is implicit that we are using the default generating set.
 
Suppose $F$ is an effective generating set for a Banach space $\mathcal{B}$. We say that a vector $g \in \mathcal{B}$ is \textit{computable with respect to} $F$ if there is an algorithm that given any nonnegative integer $k$ as input computes a vector $f \in \mathcal{L}_{\mathbb{Q}(i)}(F)$ so that $\norm{g - f} < 2^{-k}$.  Thus a point $z \in \C$ is computable (with respect to the default generating set) if there is an algorithm that given any $k \in \mathbb{N}$ as input, produces a rational point $q$ so that $|q - z| < 2^{-k}$.  A vector $f \in \ell^p$ is computable (with respect to the default generating set $E$) if there is an algorithm that given any $n,k \in \N$ as input computes a rational point $q$ so that $|q - f(n)| < 2^{-k}$.  On the other hand, if $\zeta$ is an incomputable unimodular point, then only the zero vector is computable with respect to both $E$ and $\zeta E$.

Suppose $\mathcal{B}$ is a Banach space.  When $f \in \mathcal{B}$, and $r > 0$, let $B(f; r)$ denote the open ball with center $f$ and radius $r$.  Suppose $F$ is an effective generating set for $\mathcal{B}$.  When $f \in \mathcal{L}_{\mathbb{Q}(i)}(F)$ and $r$ is a positive rational number, we call $B(f; r)$ a \textit{rational ball} (with respect to $F$). 

Suppose that for each $j \in \{1,2\}$, $F_j$ is an effective generating set for $\mathcal{B}_j$.  We say that a map $T:  \mathcal{B}_1 \rightarrow B_2$ is \textit{computable with respect to $(F_1, F_2$)} if there is an algorithm $P$ that meets the following three criteria.
\begin{enumerate}
\item \textbf{Approximation:} Given as input a rational ball with respect to $F_1$, $P$ either does not halt or produces a rational ball with respect to $F_2$.
\item \textbf{Correctness:} If $B_2$ is the output of $P$ on input $B_1$, then $T(f) \in B_2$ whenever $f \in B_1$.
\item \textbf{Convergence:} If $V$ is a neighborhood of $T(f)$, and if $U$ is a neighborhood of $f$, then $f$ belongs to a rational ball $B_1 \subseteq U$ with the property that $P$ halts on $B_1$ and produces a rational ball that is included in $U$. 
\end{enumerate}

When we speak of an algorithm accepting a rational ball $B(\sum_{j = 0}^M \alpha_j f_j; r)$ as input, we mean that it accepts some representation of the ball such as a code of the sequence $(r, M, \alpha_0, \ldots, \alpha_M)$ and similarly when we say it produces a rational ball as output we mean that it produces codes of the center and radius.  

It is well-known that many familiar functions of a complex variable (such as $\sin$, $\exp$) are computable (with respect to the generating set $\{1,i\}$ used in the domain and range).  
Note also that when $|\zeta| = 1$ the `multiplication by $\zeta$' operator on $\ell^p$ is computable with respect to $E$ and $\zeta E$.

A \emph{computable Banach space} consists of a pair $(\mathcal{B}, F)$ where $\mathcal{B}$ is a Banach space and $F$ is an effective generating set for $\mathcal{B}$.  
Unless the effective generating set truly requires explicit mention, for the sake of economy of expression we will just refer to `the computable Banach space $\mathcal{B}$'.

If $(\mathcal{B}_1, F_1)$ and $(\mathcal{B}_2, F_2)$ are computable Banach spaces, then we say 
a map $T : \mathcal{B}_1 \rightarrow \mathcal{B}_2$ is computable if it is computable with respect to $(F_1, F_2)$.   It easily follows that if $T : \mathcal{B}_1 \rightarrow \mathcal{B}_2$ is a computable surjective linear isometry, then $T^{-1}$ is also computable.  

If $(\mathcal{B}_1, F_1)$ and $(\mathcal{B}_2, F_2)$ are computable Banach spaces, then 
$(\mathcal{B}_1 \times \mathcal{B}_2, F_1 \times F_2)$ is a computable Banach space.  
Thus, if $\mathcal{B}$ is a computable Banach space, then vector addition is a computable map from $\mathcal{B} \times \mathcal{B}$ onto $\mathcal{B}$ and scalar multiplication is a 
computable map from $\C \times \mathcal{B}$ onto $\mathcal{B}$.  In addition the norm of $\mathcal{B}$ is a computable map from $\mathcal{B}$ into $[0, \infty)$.

Suppose $\mathcal{B}$ is a computable Banach space.  
A closed set $C \subseteq \mathcal{B}$ is \emph{c.e. closed} if the set of all rational balls that contain a point of $C$ is c.e..  An open set $U \subseteq \mathcal{B}$ is \emph{c.e. open} if the set of all rational balls included in $U$ is c.e..  Suppose $\mathcal{B}'$ is a computable Banach space and $f : \mathcal{B} \rightarrow \mathcal{B}'$ is computable.  It is well-known that if $U \subseteq \mathcal{B}'$ is c.e. open, then $f^{-1}[U]$ is c.e. open.  

The following is `folklore'.

\begin{proposition}\label{prop:bounding.principle}
Suppose $\mathcal{B}$ is a computable Banach space and $f : \mathcal{B} \rightarrow \R$ is a computable 
function with the property that $f(v) \geq d(v, f^{-1}[\{0\}])$ for all $v\in \mathcal{B}$.  Then, $f^{-1}[\{0\}]$ is c.e. closed.  
\end{proposition}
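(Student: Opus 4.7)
The plan is to characterize the rational balls $B(g;r)$ that meet $f^{-1}[\{0\}]$ by an explicit c.e.\ property. The heart of the matter is the hypothesis $f(v)\ge d(v,f^{-1}[\{0\}])$: when $f$ is small at some test vector $h$, the inequality forces a genuine zero to lie near $h$, and that is exactly the kind of effective certificate one needs to witness that a ball meets the zero set without being able to produce the zero itself.

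The equivalence I would aim for is this: $B(g;r)\cap f^{-1}[\{0\}]\neq\emptyset$ if and only if there exist $h\in\mathcal{L}_{\mathbb{Q}(i)}(F)$ and a positive rational $s$ with $\norm{g-h}+s<r$ and $f(h)<s$. For the ``if'' direction, note that $f(h)<s$ combined with the hypothesis gives $d(h,f^{-1}[\{0\}])<s$, so some zero $v^{*}$ sits inside $B(h;s)$; the triangle inequality together with $\norm{g-h}+s<r$ then puts $v^{*}$ inside $B(g;r)$. For ``only if'', fix $v^{*}\in B(g;r)\cap f^{-1}[\{0\}]$ and choose a rational $\epsilon>0$ with $\norm{g-v^{*}}+3\epsilon<r$. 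Since $f$ is computable it is continuous, and it is nonnegative by the hypothesis, so using continuity at $v^{*}$ together with the density of $\mathcal{L}_{\mathbb{Q}(i)}(F)$ in $\mathcal{B}$ I can find $h\in\mathcal{L}_{\mathbb{Q}(i)}(F)$ with $\norm{h-v^{*}}<\epsilon$ and $f(h)<\epsilon$. Any rational $s\in(\epsilon,2\epsilon)$ then yields both required inequalities.

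Given this equivalence, the c.e.\ enumeration is mechanical: run through all pairs of rational balls $(B(g;r),B(h;s))$, and in parallel semi-decide the two inequalities $\norm{g-h}+s<r$ and $f(h)<s$. Whenever both succeed, enumerate $B(g;r)$ into the output. Both tests are c.e.: the norm restricted to $\mathcal{L}_{\mathbb{Q}(i)}(F)$ is a computable function by the definition of an effective generating set, so $\norm{g-h}$ is a computable real uniformly in $g,h$; and $f(h)$ is a computable real uniformly in $h\in\mathcal{L}_{\mathbb{Q}(i)}(F)$ since $f$ is computable and each such $h$ is a computable vector.

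The main obstacle is simply spotting the correct certificate shape. The naive attempt of testing $f(g)<r$ directly does not catch every ball that meets the zero set, since the hypothesis only bounds the distance to the zero set from above by $f$ and says nothing about the converse. Allowing an auxiliary inner rational ball $B(h;s)$ strictly contained in $B(g;r)$, and applying the hypothesis at the auxiliary center $h$ rather than at $g$, is the move that makes the equivalence go through; the rest is bookkeeping using standard computability of the norm and of $f$.
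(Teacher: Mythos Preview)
The paper does not actually supply a proof of this proposition: it is simply labelled ``folklore'' and stated without argument. Your proof is correct and is precisely the standard argument one would expect for this kind of statement. The key observation---that the hypothesis $f(v)\ge d(v,f^{-1}[\{0\}])$ lets a small value $f(h)$ at a rational test point $h$ serve as a certificate that a zero lies in $B(h;s)$, and hence in any rational ball properly containing $B(h;s)$---is exactly right, and your handling of both directions of the equivalence and of the effectivity of the two strict inequalities is clean.
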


The computability notions we have covered are all relativized in the usual way.
We now formally state our two main theorems.

\begin{theorem}\label{thm:main.1}
Suppose $p$ is a computable real so that $p \geq 1$.  Then, whenever $\mathcal{B}_0$ and $\mathcal{B}_1$ are computable Banach spaces that are linearly isometric to $\ell^p$, the halting set computes a surjective linear isometry of $\mathcal{B}_0$ onto $\mathcal{B}_1$. 
\end{theorem}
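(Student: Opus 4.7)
The plan is to treat the cases $p = 2$ and $p \neq 2$ separately. The case $p = 2$ is handled by the Pour-El--Richards result cited in the introduction: any two computable copies of $\ell^2$ are linearly isometric via a computable map, so no oracle is needed. For $p \neq 2$, the strategy is to invoke the Banach--Lamperti classification (Theorem \ref{thm:classification}): any surjective linear isometry between two spaces isometric to $\ell^p$ must carry $\preceq$-atoms to scalar multiples of $\preceq$-atoms, and it is determined by its action on any generating sequence of disjointly supported unit atoms. So it suffices to produce, using the halting set, a sequence of such atoms $(g_n^{(i)})$ in each $\mathcal{B}_i$; the map $g_n^{(0)} \mapsto g_n^{(1)}$ then extends uniquely to a surjective linear isometry, and its action on rational combinations of the effective generating sets is computable from the halting set.

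The central sub-task is therefore: given a computable copy $\mathcal{B}$ of $\ell^p$ with effective generating set $F = \{f_0, f_1, \ldots\}$, use the halting set to enumerate disjointly supported unit atoms $g_0, g_1, \ldots$ in $\mathcal{B}$ whose span is dense. The key semi-decidable invariant is $\sigma_1$: it is preserved by isometries, it is computable on rational combinations of $F$, and the predicate $\sigma_1(h_1, h_2) = 0$ (disjoint support) is $\Pi^0_1$ and hence decidable from the halting set. I would proceed stage by stage: having fixed disjointly supported unit atoms $g_0, \ldots, g_{n-1}$ whose span contains $2^{-n}$-approximations of $f_0, \ldots, f_{n-1}$, search with the halting set for further disjointly supported unit atoms, disjoint from the previous ones, such that the enlarged span contains a $2^{-n-1}$-approximation of $f_n$. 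To locate each new atom, I would apply the sharpening of Lamperti's inequality promised in the introduction as a quantitative criterion for approximate atomicity: if a rational combination $h$ admits no nontrivial splitting into disjointly supported parts each of substantial norm, then $h$ lies within a controlled distance of a scalar multiple of an actual atom.

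The main obstacle I expect is that genuine atomicity is a priori $\Pi^0_2$, so the halting set cannot outright certify that a chosen $g_n$ is truly an atom of $\mathcal{B}$; the construction must commit to approximate atoms at each stage and organize the approximations so that they converge to actual disjointly supported unit atoms in the limit. The quantitative sharpening of Lamperti's inequality is exactly what permits this convergence, and obtaining a version of it that is strong enough (to bound the distance from a near-atom to an actual atom) yet robust under the $2^{-n}$ error schedule is where I expect the real work to lie. Once the two sequences $(g_n^{(i)})$ are in hand, packaging them into a halting-set-computable surjective linear isometry between $\mathcal{B}_0$ and $\mathcal{B}_1$ is routine.
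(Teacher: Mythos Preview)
Your high-level reduction is correct and matches the paper: after disposing of $p=2$, it suffices to produce, from the halting set, a generating sequence of disjointly supported unit vectors in any computable copy of $\ell^p$ (this is exactly the paper's Theorem~\ref{thm:comp.linear.ext}). You also correctly identify the central obstacle: atomicity is too complex to decide outright, so one needs a convergence scheme.

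However, your proposed mechanism for that convergence is underspecified, and the paper's actual route is substantially different. You suggest using the sharpened Lamperti inequality as a ``quantitative criterion for approximate atomicity,'' searching for vectors admitting no nontrivial disjoint splitting. But such a criterion, even with the halting set, does not obviously produce a \emph{sequence} converging to an atom: a vector supported on $\{0,1\}$ with tiny mass on $1$ passes every finite-tolerance ``no-splitting'' test, yet no schedule of such tests forces the supports to collapse in the limit. The real difficulty is not certifying near-atomicity of a single vector but organizing successive refinements so that a limit exists and is an atom.

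The paper solves this with a structural device your proposal does not anticipate: a \emph{disintegration}, a tree-indexed map $\phi:S\to\ell^p$ in which each node is the sum of its disjointly supported children and whose range generates $\ell^p$. Crucially, such a $\phi$ is built \emph{computably}, with no oracle (Theorem~\ref{thm:comp.disintegration}): one extends finite partial disintegrations step by step, and the sharpened Lamperti inequality is used here---not as an atomicity test---to bound the distance to the set of admissible extensions (Theorem~\ref{thm:distance.to.ext}), making that set c.e.\ closed so the search terminates. One then computably partitions $S$ into \emph{almost norm-maximizing chains} (Theorem~\ref{thm:comp.partition}); along each such chain $\phi$ converges in norm to either an atom or $\mathbf{0}$ (Theorem~\ref{thm:an.chains}). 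The halting set enters only at the final step: the limit norms along the chains are right-c.e., so $\emptyset'$ computes them and hence the limits themselves. Your stage-by-stage atom search would need to reinvent something equivalent to this tree-and-chain structure to make the convergence go through; as written, the proposal identifies the target but not the mechanism.
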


\begin{theorem}\label{thm:main.2}
Suppose $p$ is a computable real so that $p \geq 1$ and $p \neq 2$.  Suppose $C$ is a \emph{c.e.} set.  Then, 
there is a computable Banach space $\mathcal{B}$ so that $C$ computes a surjective linear isometry of $\ell^p$ onto $\mathcal{B}$ and so that any oracle that computes a surjective linear isometry of $\ell^p$ onto $\mathcal{B}$ also computes $C$.   
\end{theorem}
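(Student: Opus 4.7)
My plan is to construct a computable Banach space $\mathcal{B}$, linearly isometric to $\ell^p$, with an effective generating set $F=(h_n)_n$ designed so that Theorem \ref{thm:classification} (Banach--Lamperti) forces every surjective linear isometry $T:\ell^p\to\mathcal{B}$ to expose the c.e.\ set $C$. Fix a computable enumeration $\{C_s\}_{s\in\N}$ of $C$ with $C_0=\emptyset$ and $|C_{s+1}\setminus C_s|\leq 1$. For each $n\in\N$, I would reserve a fresh block of coordinates in $\ell^p$ and build, in stages, a small tuple of generators for $\mathcal{B}$'s $n$-th slot whose concrete identity as vectors of $\ell^p$ depends on whether (and when) $n$ is enumerated into $C$. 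The key constraint driving the construction is that the $\ell^p$-norm of every rational linear combination of generators within the block must equal the same function of the coefficients regardless of $n\in C$; this keeps the norm formula on $F$ computable without $C$, so that $F$ is effective and $\mathcal{B}$ is a computable Banach space. Arranged as an $\ell^p$-direct sum of these blocks, $\mathcal{B}$ is linearly isometric to $\ell^p$.

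For the positive direction, with $C$ as oracle one can determine the final configuration of each block and match the atoms of $\ell^p$ to the atoms of $\mathcal{B}$, producing a $C$-computable surjective linear isometry $\ell^p\to\mathcal{B}$. The easy part of verifying this is that approximations to the atoms of $\mathcal{B}$ in terms of $F$ become available uniformly from $C$.

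The main obstacle, and the technical heart of the argument, is the negative direction. Given an oracle $X$ computing some surjective linear isometry $T:\ell^p\to\mathcal{B}$, I would apply Theorem \ref{thm:classification} inside $\mathcal{B}\cong\ell^p$: $T$ must have the form $T(e_k)=\lambda_k a_{\phi(k)}$ for unimodular $\lambda_k$ and a permutation $\phi$, where $\{a_j\}$ is the atomic basis of $\mathcal{B}$. Using the $X$-computable approximations of $T(e_k)$, I would identify, uniformly in $k$, which line of $\mathcal{B}$ contains $T(e_k)$; the construction's encoding would be set up so that this identification, together with the staged enumeration of $C$, decides membership in $C$. The delicate point is that the permutation-plus-unimodular freedom supplied by Theorem \ref{thm:classification} lets $X$ choose $\phi$ and $\lambda_k$ in many ways, which naively could absorb the $C$-dependence. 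To defeat this, the construction must exploit the algebraic rigidity of $\ell^p$ for $p\neq 2$ --- specifically, the fact (used already in Theorem \ref{thm:classification}) that no non-permutation rotation of an atomic basis preserves the $\ell^p$-norm --- so that the encoded $C$-information remains detectable under every valid choice of $(\phi,\lambda)$. I expect the final step to invoke Proposition \ref{prop:bounding.principle} or a direct c.e./co-c.e.\ argument to conclude that $X$ computes the characteristic function of $C$.
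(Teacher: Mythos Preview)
Your high-level strategy is right: encode $C$ into an effective generating set for $\ell^p$ so that the norm on rational combinations is computable without $C$, and then use the Banach--Lamperti classification to force any surjective isometry to reveal $C$. This is exactly the paper's plan as well. Where you diverge is in the shape of the encoding and in how much is actually pinned down.

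The paper does \emph{not} build a block-by-block direct sum indexed by $n\in\N$. It uses a single global trick: with $\{c_n\}$ a one-to-one enumeration of $C$ and $\gamma=\sum_{k\in C}2^{-k}$, set
\[
f_0=(1-\gamma)^{1/p}e_0+\sum_{n\ge 0}2^{-c_n/p}e_{n+1},\qquad f_{n+1}=e_{n+1},
\]
so that $\norm{f_0}_p=1$ identically and every norm $\norm{\sum_j\alpha_j f_j}_p$ is computable from the $\alpha_j$ alone (the $\gamma$-dependence cancels). Banach--Lamperti then says any surjective isometry $T$ sends some $e_{j_0}$ to $\lambda e_0$ with $|\lambda|=1$; approximating $\lambda e_0$ in the $F$-basis forces the coefficient of $f_0$ to approximate $(1-\gamma)^{-1/p}$, and hence the oracle computes $\gamma$ and $C$. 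Conversely $C$ computes $e_0$ in terms of $F$, so $C$ computes the identity map. No appeal to Proposition~\ref{prop:bounding.principle} is needed; the negative direction is a two-line estimate.

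Your proposal, by contrast, leaves the essential construction unspecified. You correctly identify the constraint (norms of rational combinations must not depend on $C$) and the hazard (the permutation--unimodular freedom in Banach--Lamperti could swallow the encoding), but you do not say what the block generators actually are or how the encoding survives arbitrary $(\phi,\lambda)$. That is precisely the content of the proof, not a detail to be filled in later. A block scheme in which each block is internally isometric to a fixed model regardless of $n\in C$ will, by design, be indistinguishable under isometry, so the $C$-dependence has to live in how the block relates to the generating set --- and you have not said how. The paper sidesteps all of this: rather than trying to make membership of each $n$ individually detectable, it collapses $C$ into one incomputable real and plants it as the single coefficient that every surjective isometry is forced, by Banach--Lamperti, to expose.
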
 

So if we take $C$ to be the halting set in Theorem \ref{thm:main.2}, then it follows that the problem of computing a linear isometric map of one computable copy of $\ell^p$ onto another is at least as hard as computing membership in the halting set.  

We close this section by mentioning some related work.  A.G. Melnikov and K.M. Ng have investigated computable categoricity questions with regards to the space $C[0,1]$ of continuous functions on the unit interval with the supremum norm.  In particular, they have shown that $C[0,1]$ is not computably categorical as a metric space nor as a Banach space \cite{Melnikov.2013}, \cite{Melnikov.Ng.2014}.  The study of computable categoricity for countable structures goes back at least as far as the 1961 work of A.I. Malcev.  The text of Ash and Knight has a thorough discussion of the main early results of this line of inquiry \cite{Ash.Knight.2000}.  The survey by Fokina, Harizanov, and Melnikov contains a wealth of recent results on computable categoricity and other directions in the countable computable structures program \cite{Fokina.Harizanov.Melnikov.2014}.

\section{Overview}\label{sec:overview}

The proof of Theorem \ref{thm:main.2} is fairly straightforward.   
Here, we set forth the key concepts and supporting intermediate results for the proof of Theorem \ref{thm:main.1}.  

We first reduce the problem to the computation of surjective isometric endomorphisms.  Fix a computable real so that $p \geq 1$.  Suppose $(\mathcal{B}, G)$ is a computable Banach space, and suppose $T$ is a linear isometric mapping of $\mathcal{B}$ onto $\ell^p$.  Then, $T[G]$ is an effective generating set for $\ell^p$, and $T$ is computable with respect to $(G, T[G])$.  
Thus, since inverses of computable surjective linear isometries are computable, the study of computable Banach spaces that are linearly isometric to $\ell^p$ can be reduced to the study of computability notions on $\ell^p$ with respect to different generating sets.  In particular, to prove Theorem \ref{thm:main.1}, it suffices to show that 
whenever $F$ is an effective generating set for $\ell^p$, the halting set computes a surjective isometric endomorphism of $\ell^p$ with respect to $(E,F)$.  

Now, suppose $g_0, g_1, \ldots, $ are disjointly supported unit vectors in $\ell^p$.  Then, there is a unique 
linear isometric map $T : \ell^p \rightarrow \ell^p$ so that $T(e_n) = g_n$ for all $n$; if the $g_n$'s generate 
$\ell^p$, then $T$ is also surjective.  Furthermore, if an oracle $X$ computes $\{g_n\}_{n = 0}^\infty$ with respect to an effective generating set $F$, then $X$ also computes $T$ with respect to $(E,F)$.   So, to prove Theorem \ref{thm:main.1}, it suffices to prove the following.

\begin{theorem}\label{thm:comp.linear.ext}
If $p$ is a computable real so that $p \geq 1$ and $p \neq 2$, and if $F$ is an effective generating set for $\ell^p$, then 
with respect to $F$ the halting problem computes a sequence of disjointly supported unit vectors that generate $\ell^p$.
\end{theorem}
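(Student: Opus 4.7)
\emph{The plan.} By the reduction in the paragraph preceding the theorem, it suffices to construct with oracle $\emptyset'$ a sequence of disjointly supported unit vectors $g_0, g_1, \ldots \in \ell^p$, each represented uniformly in $n$ as an $F$-computable vector, whose closed linear span is all of $\ell^p$. I would build the sequence stage by stage, meeting two kinds of requirements: each $g_n$ must be a unit atom (a scalar multiple of some standard basis vector, by Theorem~\ref{thm:classification}) disjointly supported from $g_0, \ldots, g_{n-1}$, and each $f_k \in F$ must eventually lie within $2^{-m}$ of the linear span of $\{g_0, \ldots, g_{m-1}\}$ for all sufficiently large $m$.

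\emph{Computability of disjoint support and of the atom condition.} The essential analytic tool is the sharpening of Lamperti's inequality advertised for Section~\ref{sec:classical}. I expect a quantitative lower bound on $\sigma_1(u,v)$ in terms of the $\ell^p$-distance from $(u,v)$ to a disjointly supported pair. Since the norm, and hence $\sigma_1$, is $F$-computable, such a bound combined with Proposition~\ref{prop:bounding.principle} should yield that for any $F$-computable tuple $g_0, \ldots, g_{n-1}$, the set of $F$-computable vectors disjointly supported from each $g_i$ is c.e.\ closed in the $F$-topology. A parallel argument---using the characterization of a unit atom as a norm-one vector admitting no nontrivial disjointly supported decomposition---should yield that the set of unit atoms is also c.e.\ closed. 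Consequently, $\emptyset'$ can, whenever one classically exists, locate an $F$-rational ball of any prescribed radius meeting the set of unit atoms disjointly supported from $g_0, \ldots, g_{n-1}$.

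\emph{Stage-by-stage construction and main obstacle.} At stage $n$ with $g_0, \ldots, g_{n-1}$ already chosen, first use $\emptyset'$ to check whether some $f_k$ with $k \leq n/2$ is still further than $2^{-k}$ from $\langle g_0, \ldots, g_{n-1}\rangle$; if so, form the $\emptyset'$-computable residual $f_k$ minus its best approximation from the span (straightforward because the $g_i$ are disjointly supported unit atoms) and pick $g_n$ to be a unit atom disjointly supported from $g_0,\ldots,g_{n-1}$ whose support meets that of the residual. Otherwise, pick any new unit atom in the disjoint complement. Classical existence of such a $g_n$ is guaranteed by Theorem~\ref{thm:classification}: the only disjointly supported unit-vector generating sets of $\ell^p$ are, up to unimodular scaling and reordering, enumerations of $\{e_m\}_{m \in \N}$, so every $e_m$ appearing in the support of some $f_k$ must eventually be picked. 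The main obstacle is proving the sharpening of Lamperti's inequality in a form strong enough to feed into Proposition~\ref{prop:bounding.principle} twice---once for the disjoint-support relation and once for the atom condition. Once those two c.e.-closedness facts are established, the rest of the argument is a formal orchestration of the halting oracle.
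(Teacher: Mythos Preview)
Your proposal has a genuine gap at the step where you claim the set of unit atoms is c.e.\ closed. For Proposition~\ref{prop:bounding.principle} to apply you need a \emph{computable} function $f:\ell^p\to\R$ vanishing exactly on the unit atoms and satisfying $f(v)\ge d(v,\{\text{unit atoms}\})$. The sharpened Lamperti inequality furnishes such a function for the disjoint-support relation because $\sigma_1$ is an explicit norm expression in two vectors. But the characterization ``$g$ admits no nontrivial disjointly supported decomposition'' is a universal statement quantifying over all splittings $g=h_1+h_2$; turning it into a single computable real-valued witness with the required lower bound is not parallel to the $\sigma_1$ argument, and nothing in the paper produces such a function. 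Without it you cannot use $\emptyset'$ to locate an atom inside a prescribed $F$-ball, and your stagewise construction stalls at the first step. The further step---picking an atom whose support meets that of a given residual---compounds the difficulty, since with respect to an arbitrary $F$ you have no direct handle on which standard basis vector an atom is a multiple of.

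The paper's route avoids detecting atoms altogether. It builds, computably in $F$ alone (no oracle), a \emph{disintegration}: a tree-indexed family $\phi:S\to\ell^p$ in which each node is the sum of its disjointly supported children and whose range generates $\ell^p$ (Theorems~\ref{thm:comp.disintegration} and~\ref{thm:comp.partial.disintegration.ext}). The sharpened Lamperti inequality is used only here, to show that the set of strong reverse-order homomorphisms extending a given partial one is c.e.\ closed (Theorem~\ref{thm:distance.to.ext} feeding Lemma~\ref{lm:ce.closed.open}); this is the ``split a vector into two disjoint pieces'' task, for which $\sigma_1$ is exactly suited. Atoms then emerge indirectly: one computably partitions $S$ into almost norm-maximizing chains (Theorem~\ref{thm:comp.partition}), and a purely classical argument (Theorem~\ref{thm:an.chains}) shows that the $\preceq$-infimum along each such chain is either $\mathbf{0}$ or an atom, and that together these infima generate $\ell^p$. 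The halting set enters only at the end, to compute each infimum as a norm limit (the norm along a chain is right-c.e.) and to discard the chains whose infimum is $\mathbf{0}$. In short, the paper replaces your direct search for atoms with an iterated disjoint splitting and lets the tree combinatorics force atoms to appear in the limit.
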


Our main tool for producing such a sequence of unit vectors is the concept of a \emph{disintegration} which we 
define now.  To begin, fix a real $p \geq 1$.  Suppose $S \subseteq \omega^{<\omega}$ and $\phi : S \rightarrow \ell^p$.  We say that $\phi$ is a \emph{reverse-order homomorphism} if $\phi(\tau) \preceq \phi(\rho)$ whenever $\tau, \rho \in S$ are such that $\tau \supset \rho$.   (Recall from Subsection \ref{subsec:background.fa} that $f \preceq g$ if and only if $f(n) = 0$ whenever $f(n) \neq g(n)$.)  We say that $\phi$ is a \emph{strong reverse-order homomorphism} if 
it is a reverse-order homomorphism with the additional feature that it maps incomparable nodes to 
disjointly supported vectors.   
Accordingly, an injective (strong) reverse-order homomorphism will be called a (strong) reverse-order monomorphism.  

Suppose $S$ is a subtree of $\omega^{<\omega}$. When $\nu$ is a nonterminal node of $S$, let $\nu^+_S$ denote the set of all children of $\nu$ in $S$.  Call a map $\phi : S \rightarrow \ell^p$ \emph{summative} if 
$\phi(\nu) = \sum_{\nu' \in \nu^+_S} \phi(\nu')$ whenever $\nu$ is a nonterminal node of $S$.  
A \emph{disintegration} is a summative strong reverse-order monomorphism whose range generates $\ell^p$.

That disintegrations exist is easy to see; e.g. set $\phi(\emptyset) = \sum_n 2^{-n} e_n$ and set 
$\phi((n)) = 2^{-n} e_n$.  The challenge is to produce a disintegration that is computable with respect to an effective generating set $F$ (in the sense that there is an algorithm that given a $\nu \in S$ and a $k \in \N$ computes an $f \in \mathcal{L}_{\Q(i)}(F)$ so that $\norm{\phi(\nu) - f}_p < 2^{-k}$).  Accordingly, in Section \ref{sec:computable}, we prove the following.

\begin{theorem}\label{thm:comp.disintegration}
If $p \geq 1$ is a computable real besides $2$, and if $F$ is an effective generating set for $\ell^p$, then 
there is a disintegration of $\ell^p$ that is computable with respect to $F$.
\end{theorem}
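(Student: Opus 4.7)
The plan is to construct the disintegration $\phi\colon S \to \ell^p$ by a stage-by-stage effective recursion. At each stage $s$ I maintain a finite subtree $S_s \subseteq \omega^{<\omega}$ and values $\phi(\nu) \in \ell^p$ for $\nu \in S_s$, together with a uniform procedure that, on input $(\nu, k)$, produces an approximation to $\phi(\nu)$ within $2^{-k}$ by an element of $\mathcal{L}_{\mathbb{Q}(i)}(F)$. The key inductive step at a terminal node $\nu_0$ whose value $\phi(\nu_0)$ is not yet known to be a $\preceq$-atom is to produce a nontrivial disjointly supported decomposition $\phi(\nu_0) = g_1 + g_2$, attach two new children to $\nu_0$, and assign them the values $g_1, g_2$. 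Since $g_1, g_2 \preceq \phi(\nu_0)$ are disjointly supported and nonzero, summativity, the strong reverse-order monomorphism property, and injectivity propagate automatically.

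The heart of the proof---and the main obstacle---is realising this splitting effectively from rational approximations in $\mathcal{L}_{\mathbb{Q}(i)}(F)$ alone, with no direct access to decompositions in the standard basis $E$. My plan is to use the sharpening of Lamperti's inequality developed in Section \ref{sec:classical}. That sharpening converts the a priori $\Pi^0_1$ relation ``$h_1, h_2$ are disjointly supported'' into a quantitative statement, so that a c.e.\ search through pairs $(h_1, h_2) \in \mathcal{L}_{\mathbb{Q}(i)}(F)^2$ with $\norm{h_1 + h_2 - \phi(\nu_0)}_p$ small and $\sigma_1(h_1, h_2)$ small can be upgraded to genuine disjointly supported vectors within a controllable distance of the candidates. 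Iterating at successively finer accuracies yields Cauchy sequences in $\mathcal{L}_{\mathbb{Q}(i)}(F)$ converging to an exact splitting $g_1, g_2 \in \ell^p$, each computable from $F$ uniformly in $\nu_0$. When $p = 2$ the parallelogram law collapses $\sigma_1$ to zero; I would handle that case separately by effective Gram--Schmidt orthogonalisation on the enumeration of $F$, since the Hilbert-space setting renders disjointness detection trivial.

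Two further ingredients complete the plan. First, the root value $\phi(\emptyset)$ must have full support, because summativity forces $\supp(\phi(\nu)) \subseteq \supp(\phi(\emptyset))$ for every $\nu \in S$; I would build $\phi(\emptyset)$ as a rapidly convergent weighted sum $\sum_i c_i f_i$ with rational coefficients selected stage-by-stage (by a finite search against the computable norm) so as to avoid cancellations and thereby ensure each coordinate eventually belongs to the support. Second, to guarantee $\langle \ran(\phi) \rangle = \ell^p$, a priority schedule arranges that every $f_n \in F$ eventually lies in the closed linear span of the $\phi(\nu)$ produced so far, while also revisiting each non-atomic node infinitely often so the splitting process is thorough. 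All the combinatorial bookkeeping is routine once the sharpened Lamperti inequality is in hand: it is the only genuinely new mathematical ingredient, and providing it is precisely the role of Section \ref{sec:classical}.
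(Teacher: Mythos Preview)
Your core idea---using the sharpened Lamperti inequality (Theorem~\ref{thm:lamperti}) to upgrade approximate disjointness in $\mathcal{L}_{\mathbb{Q}(i)}(F)$ to exact disjointly supported splittings---matches the paper's, and that is indeed the crux. But your decision to fix the root $\phi(\emptyset)$ first and then only split downward introduces a genuine gap. You correctly note that $\supp(\phi(\emptyset))$ must equal $\mathbb{N}$; however, your plan to secure this by ``a finite search against the computable norm so as to avoid cancellations'' cannot work as stated. The coordinate functionals $g\mapsto g(n)$ are not computable from $F$-data, so no finite norm computation tells you whether a candidate $\sum_i c_i f_i$ is supported at a particular $n$, and hence you cannot effectively detect or avoid a cancellation there. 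A Baire-category argument shows full-support vectors exist in abundance, but gives no effective way to certify one.

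The paper sidesteps this by never assigning a value at the root during the construction: a \emph{partial disintegration} is defined only on $S\setminus\{\emptyset\}$, and Theorem~\ref{thm:comp.partial.disintegration.ext} permits new level-$1$ nodes to be introduced at every stage. The root is set only at the end as $\phi(\emptyset)=\sum_{(a)\in S}\phi((a))$, and full support falls out automatically from driving the success index to infinity (which forces $d(f_j,\langle\ran(\psi)\rangle)\to 0$ for every $j$). Your split-below-a-fixed-root scheme cannot add level-$1$ mass once the root is committed, so this device is unavailable to you. A related secondary gap: your priority schedule for forcing each $f_n$ into $\langle\ran(\phi)\rangle$ is underspecified, since a binary split of a single leaf need not separate the coordinates required to approximate a given $f_n$; the paper handles this uniformly by searching over \emph{all} extensions in $\mathcal{H}_{\phi,S'}$ (not just binary splits) and invoking the classical existence result Theorem~\ref{thm:partial.disintegration.ext} to guarantee termination.
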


How does possession of a disintegration $\phi : S \rightarrow \ell^p$ that is computable 
with respect to an effective generating set $F$ help us to prove Theorem \ref{thm:comp.linear.ext}?
Intuitively, to define $g_n$ we want to use the halting set to compute the limit of $\phi(\nu)$ as $\nu$
descends some carefully chosen branch of $S$.   To see how we choose these 
branches, we now define the concept of an almost norm-maximizing chain.  When $\nu$ is a non-root node of  $\omega^{< \omega}$, let $\nu^-$ denote its parent.

\begin{definition}\label{def:almost.norm.max}
Suppose $\phi : S \rightarrow \ell^p$ is a disintegration.  
\begin{enumerate}
	\item If $\nu$ is a non-root node of $S$, then we say $\nu$ is an \emph{almost norm-maximizing child} of 
	$\nu^-$ if $\norm{\phi(\mu)}_p^p \leq \norm{\phi(\nu)}_p^p + 2^{-(|\nu|+1)}$ whenever 
	$\mu$ is a child of $\nu^-$ in $S$.
	
	\item A chain $\mathcal{C} \subseteq S$ is \emph{almost norm-maximizing} if 
	every nonterminal node in $\mathcal{C}$ has an almost norm-maximizing child in $\mathcal{C}$.
\end{enumerate}
\end{definition}

In Section \ref{sec:classical} we prove the following.

\begin{theorem}\label{thm:an.chains}
Suppose $\phi : S \rightarrow \ell^p$ is a disintegration.
\begin{enumerate}
	\item If $\mathcal{C} \subseteq S$ is an almost norm-maximizing chain, then 
the $\preceq$-infimum of $\phi[\mathcal{C}]$ exists and is either $\mathbf{0}$ or an atom of $\preceq$.  
Furthermore, $\inf \phi[\mathcal{C}]$ is the limit in the $\ell^p$ norm of $\phi(\nu)$ as $\nu$ traverses 
the nodes in $\mathcal{C}$ in increasing order.
\label{thm:an.chains::itm:infimum}

	\item If $\{\mathcal{C}_n\}_{n = 0}^\infty$ is a partition of 
$S$ into almost norm-maximizing chains, then $\inf \phi[\mathcal{C}_0]$, $\inf \phi[\mathcal{C}_1]$, $\ldots$ 
are disjointly supported vectors that generate $\ell^p$.
\label{thm:an.chains::itm:partition}
\end{enumerate}
\end{theorem}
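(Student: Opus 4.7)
For part (1), list $\mathcal{C}$ in increasing order as $\nu_0 \subsetneq \nu_1 \subsetneq \cdots$ and set $h_k = \phi(\nu_k)$. The reverse-order property gives $h_{k+1} \preceq h_k$, whence the disjoint-support identity for the $\ell^p$ norm yields $\norm{h_k}_p^p - \norm{h_{k+1}}_p^p = \norm{h_k - h_{k+1}}_p^p$. The norms form a nonincreasing nonnegative sequence, hence convergent, and telescoping makes $(h_k)$ Cauchy in $\ell^p$, with some limit $g$. A coordinate-wise check identifies $g = h_0 \cdot \chi_{S_\infty}$ where $S_\infty := \bigcap_k \supp(h_k)$; from this $g \preceq h_k$ for all $k$, and any $h$ with $h \preceq h_k$ for all $k$ satisfies $h \preceq g$. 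Hence $g = \inf \phi[\mathcal{C}]$.

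To show $g$ is $\mathbf{0}$ or an atom, suppose $\supp(g)$ contains distinct $m, n$. For each $j$, summativity and disjointness of siblings show that $T_j := \{\mu \in S : j \in \supp(\phi(\mu))\}$ is a chain; the hypothesis $g \preceq h_k$ for all $k$ then forces both $T_m$ and $T_n$ to equal the chain consisting of the ancestors of $\nu_0$ together with $\mathcal{C}$. (For finite $\mathcal{C}$ this uses that almost-norm-maximality forces the top of $\mathcal{C}$ to be terminal in $S$, else the definition would require a child of the top to sit in $\mathcal{C}$.) On this common chain, the reverse-order property gives $\phi(\mu)(m) = g(m)$ and $\phi(\mu)(n) = g(n)$; off it both values vanish. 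Hence every finite $\phi[S]$-combination has coordinates $m$ and $n$ in the fixed ratio $g(m):g(n)$, and continuity of coordinate evaluation extends this to the closure $\langle \phi[S]\rangle = \ell^p$, contradicting $e_m \in \ell^p$.

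For (2), set $g_n = \inf \phi[\mathcal{C}_n]$. Disjointness of supports reduces to finding, for $m \neq n$, incomparable $\mu \in \mathcal{C}_m$ and $\nu \in \mathcal{C}_n$, since then $\phi(\mu), \phi(\nu)$ are disjointly supported and $g_m \preceq \phi(\mu)$, $g_n \preceq \phi(\nu)$. Starting from any $\mu_0 \in \mathcal{C}_m$ and $\nu_0 \in \mathcal{C}_n$ with (WLOG) $\mu_0 \subsetneq \nu_0$, almost-norm-maximality prevents $\mathcal{C}_m$ from stopping at the nonterminal $\mu_0$; iterating along $\mathcal{C}_m$ at most $|\nu_0| - |\mu_0|$ times and using $\mathcal{C}_m \cap \mathcal{C}_n = \emptyset$, the chain $\mathcal{C}_m$ must at some step depart the ancestor path of $\nu_0$, yielding the required node.

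For generation, note first that $\phi(\emptyset)$ has full support---else by summativity some $e_j$ would be in the kernel of every coordinate functional on $\phi[S]$, contradicting $\langle \phi[S]\rangle = \ell^p$. Fix $j$ and enumerate $T_j$ as $\nu^{(0)} \subsetneq \nu^{(1)} \subsetneq \cdots$. Call the step $\nu^{(k)} \to \nu^{(k+1)}$ a \emph{switch} if $\nu^{(k+1)}$ is not the continuation of $\nu^{(k)}$'s partition chain. At such a switch, writing $\mu^*$ for the chosen almost-norm-maximizing sibling, almost-norm-maximality gives $\norm{\phi(\nu^{(k+1)})}_p^p \leq \norm{\phi(\mu^*)}_p^p + 2^{-(|\nu^{(k)}|+2)}$, while summativity gives $\norm{\phi(\nu^{(k)})}_p^p \geq \norm{\phi(\nu^{(k+1)})}_p^p + \norm{\phi(\mu^*)}_p^p$; combining yields $\norm{\phi(\nu^{(k+1)})}_p^p \leq \tfrac{1}{2}(\norm{\phi(\nu^{(k)})}_p^p + 2^{-(|\nu^{(k)}|+2)})$. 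Iterating, infinitely many switches would drive $\norm{\phi(\nu^{(k)})}_p^p \to 0$, contradicting the constant lower bound $|\phi(\emptyset)(j)|^p > 0$ along $T_j$. Hence $T_j$ eventually lies in a single $\mathcal{C}_{n_j}$, and (1) identifies $g_{n_j} = \phi(\emptyset)(j) \cdot e_j$. So $\{g_n\}$ contains a nonzero multiple of every $e_j$ and generates $\ell^p$. The switch-counting estimate is the principal obstacle; everything else reduces to bookkeeping with $\preceq$ and the disjoint-support identity.
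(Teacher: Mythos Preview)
Your proof is correct, and for part (1) and the disjoint-support half of part (2) it follows essentially the same route as the paper: the descending-chain limit argument is the content of the paper's Proposition 4.1 (which you have inlined), and the atomicity argument is the same fixed-ratio obstruction to generating $\ell^p$, though you are more explicit than the paper about why the finite-$\mathcal{C}$ case causes no trouble (the top of $\mathcal{C}$ is terminal). Your construction of incomparable nodes in $\mathcal{C}_m$ and $\mathcal{C}_n$ is a cleaner packaging of the paper's case analysis.

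The genuinely different step is the ``finitely many switches'' argument in part (2). The paper argues by \emph{accumulation}: at each switch it records the rejected almost-norm-maximizing sibling $\lambda_m$, observes that the $\lambda_m$ are pairwise disjointly supported and all dominated by a fixed $g_0$, yet each has $p$-th power norm at least $|g_0(j)|^p - 2^{-s_m}$; summing gives $\infty \leq \norm{g_0}_p^p$. You argue by \emph{contraction}: combining the almost-norm-maximizing inequality with summativity gives $\norm{\phi(\nu^{(k+1)})}_p^p \leq \tfrac{1}{2}\bigl(\norm{\phi(\nu^{(k)})}_p^p + 2^{-(k+2)}\bigr)$ at each switch, so infinitely many switches force the norms to $0$, contradicting the standing lower bound $|\phi(\emptyset)(j)|^p$. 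Both arguments exploit the same structural fact (at a switch, the almost-norm-maximizing child goes one way while $T_j$ goes the other), but package the contradiction differently; your halving estimate is slightly more quantitative and avoids tracking the auxiliary family $\{\lambda_m\}$, while the paper's version makes the role of disjoint support more visible.
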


And, in Section \ref{sec:computable}, we prove:

\begin{theorem}\label{thm:comp.partition}
Suppose $\phi : S \rightarrow \ell^p$ is a disintegration that is computable with respect to an effective
generating set $F$.  Then, there is a computable partition of $S$ into c.e. almost norm-maximizing chains.
\end{theorem}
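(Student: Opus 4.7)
The plan is to build the partition greedily by selecting, for each nonterminal node $\nu \in S$, one ``distinguished'' child $\nu^* \in \nu^+_S$ that is almost norm-maximizing. Each chain will then consist of a ``chain-root'' $\mu_0$ (either $\emptyset$ or a non-distinguished child) followed by $\mu_0^*, \mu_0^{**}, \ldots$, continuing until we hit a terminal node or indefinitely.

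The core task is therefore to compute $\nu^*$ from $\nu$, and this is where all the work lies. The tolerance required by Definition \ref{def:almost.norm.max} is $\epsilon = 2^{-(|\nu|+2)}$. Summativity combined with disjoint support of children gives $\sum_{\mu \in \nu^+_S} \norm{\phi(\mu)}_p^p = \norm{\phi(\nu)}_p^p$, so in particular all but finitely many children have $p$-norm below $\epsilon$. Using computability of $\phi$ with respect to $F$, approximate each $\norm{\phi(\nu \cat (i))}_p^p$ and $\norm{\phi(\nu)}_p^p$ to within small rational tolerances (with the tolerances for the children chosen as a summable sequence $\delta/2^{j+1}$, so that cumulative error stays below $\delta$). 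Enumerate the children $\nu \cat (i_0), \nu \cat (i_1), \ldots$ of $\nu$ in $S$ and wait until the running partial sum of approximate $p$-norms matches the approximation of $\norm{\phi(\nu)}_p^p$ to within a small constant multiple of $\delta$; this must occur by the summativity identity. At that point, output the child seen so far whose approximate $p$-norm is largest. A short $\epsilon/\delta$-calculation shows that any unseen child has $p$-norm below $\epsilon$, while for the seen children the approximation errors account for at most $\epsilon$ separation from the true maximum, so the chosen child is almost norm-maximizing.

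With $\nu \mapsto \nu^*$ computable, the set of chain-roots $\{\emptyset\} \cup \{\mu \in S : \mu \neq (\mu^-)^*\}$ is computable, since checking $\mu = (\mu^-)^*$ just runs the selection algorithm once. Enumerate the chain-roots as $\mu_0 = \emptyset, \mu_1, \mu_2, \ldots$ and let $\mathcal{C}_n$ be the chain obtained by iterating selection from $\mu_n$. Each $\mathcal{C}_n$ is c.e. (by iterating the selection algorithm) and almost norm-maximizing by construction. The partition is computable: given any $\nu \in S$, trace $\nu \mapsto \nu^- \mapsto \cdots$ upward, checking at each step whether the current node is the distinguished child of its parent, until reaching a chain-root; this halts in at most $|\nu|$ steps and uniquely identifies the $\mathcal{C}_n$ containing $\nu$.

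The main obstacle is the termination of the selection procedure: naive enumeration cannot succeed, since there is no a priori bound on how many children must be examined before a near-maximum is found. Summativity is the decisive ingredient here, turning what looks like an infinite search into a finite one by providing an effective tail bound. Once termination of selection is secured, the remainder of the construction is essentially bookkeeping.
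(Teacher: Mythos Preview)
Your proposal is correct and follows essentially the same strategy as the paper's: reduce the problem to computing a selection function $\nu \mapsto \nu^*$ by exploiting the summative identity $\sum_{\mu \in \nu^+_S}\norm{\phi(\mu)}_p^p = \norm{\phi(\nu)}_p^p$ to bound the tail, then output the child with the largest approximate norm among those enumerated so far. The paper's termination test is phrased as ``approximate remaining mass $<$ current approximate maximum'' (Lemmas~\ref{lm:max.norm.below} and~\ref{lm:max.norm.below.approx}) rather than your ``approximate remaining mass $<$ fixed $\delta$'', which lets it conclude that the maximum among the children seen so far is already the global maximum, but this is a minor variation and your version works equally well.
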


So, to prove Theorem \ref{thm:comp.linear.ext} we first compute with respect to $F$ a disintegration 
$\phi : S \rightarrow \ell^p$, and then compute a partition of $S$ into c.e. almost norm-maximizing chains 
$\mathcal{C}_0$, $\mathcal{C}_1$, $\ldots$.  Set $g_n = \inf \phi[\mathcal{C}_n]$.  Note that 
$\norm{g_n}_p$ is a right-c.e. real.   Thus, the halting set computes $\norm{g_n}_p$ from $n$.  
Since $g_n \preceq \phi(\nu)$ for all $\nu \in \mathcal{C}_n$, it follows that the halting set computes $g_n$ with respect to $F$ (since $\norm{\phi(\nu) - g_n}_p^p = \norm{\phi(\nu)}_p^p - \norm{g_n}_p^p$ for all $\nu \in \mathcal{C}_n$).  
We can also use the halting set to enumerate all values of $n$ for 
which $g_n \neq \mathbf{0}$; denote these $n_0 < n_1 < \ldots$.  Then, $\{\norm{g_{n_j}}^{-1} g_{n_j}\}_{j = 0}^\infty$ is a disjointly supported sequence of unit vectors that generates $\ell^p$, and the halting set 
computes $\{g_{n_j}\}_{j = 0}^\infty$ with respect to $F$.  

Let us now return to the proof of Theorem \ref{thm:comp.disintegration}.  
The idea is to construct a disintegration $\phi$ via an ascending sequence of partial disintegrations 
that are computable uniformly with respect to $F$.  Specifically, we define a 
 \emph{partial disintegration} to be a strong order monomorphism $\psi : S- \{\emptyset\} \rightarrow \ell^p$ where 
$S$ is a finite non-empty subtree of $\omega^{< \omega}$.  
We say a partial disintegration $\psi_2$ \emph{extends} a partial disintegration $\psi_1$ if 
$\dom(\psi_1) \subseteq \dom(\psi_2)$ and if $\psi_2(\nu) = \psi_1(\nu)$ for all $\nu \in \dom(\psi_1)$.

Let $F = \{f_0, f_1, \ldots\}$ be a generating set for $\ell^p$.  If $F$ is an effective generating set, then it is quite easy to produce a partial disintegration that is computable with respect to $F$.  Namely, 
set $S = \{\emptyset\}$ and $\psi = \emptyset$.  
Of course, this partial disintegration does not do much for us and is quite far from being a disintegration.  
Accordingly, when $\psi : S- \{\emptyset\} \rightarrow \ell^p$ is a partial disintegration, we define 
the \emph{success index of $\psi$} (with respect to $F$) to be the largest integer $N$ so that 
$d(f_j, \langle \ran(\psi) \rangle) < 2^{-N}$ whenever $0 \leq j < N$ and 
\[
\norm{ \psi(\nu) - \sum_{\nu' \in \nu^+_S} \psi(\nu')}_p < 2^{-N}
\]
whenever $\nu$ is a non-root nonterminal node of $S$. 

Here is how we can glue an ascending sequence of partial disintegrations into a disintegration.  
Suppose $\psi_0, \psi_1, \ldots$ is an ascending sequence of partial disintegrations (in the sense that $\psi_{n+1}$ extends $\psi_n$) so that 
the success index of $\psi_{n+1}$ is larger than $n$ for all $n$.
Set $S = \{\emptyset\} \cup \bigcup_n \dom(\psi_n)$, and set 
\[
\psi(\nu) = 2^{-\nu(0)} \lim_n (\norm{\psi_n(\nu(0))}_p + 1)^{-1}\psi_n(\nu)
\]
 for all $\nu \in S - \{\emptyset\}$.  Set $\psi(\emptyset) = \sum_{(a) \in S} \psi((a))$.  Then, $\psi$ is a disintegration.  Such a chain of partial disintegrations can be obtained by a fairly straightforward application of the following which is proven in Section \ref{sec:computable}.

\begin{theorem}\label{thm:comp.partial.disintegration.ext}
Suppose $F$ is an effective generating set for $\ell^p$ where $p \geq 1$ is a computable real besides $2$.  
If $N,k \in \N$, and if $\phi : S - \{\emptyset\} \rightarrow \ell^p$ is a partial disintegration that is computable with respect to $F$, then there exists a map $\mathbf \phi' : S- \{\emptyset\} \rightarrow \ell^p$ so that 
\[
\max\{\phi(\nu) - \phi'(\nu)\ : \nu \in S - \{\emptyset\} \} < 2^{-k}
\]
 and so that $\phi'$ extends to a partial disintegration $\psi$ that is computable with respect to $F$ and whose success index with respect to $F$ 
is larger than $N$.  Furthermore, 
an index of $\psi$ can be computed from $N$, $k$ and an index of $\phi$. 
\end{theorem}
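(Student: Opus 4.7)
The plan is to extend $\phi$ by adjoining new children at various nodes of $S := \dom(\phi) \cup \{\emptyset\}$ --- new children of each current terminal to refine it, new first-level children of $\emptyset$ for parts of $f_0,\dots,f_N$ whose coordinates are disjoint from $\bigcup_n \supp(\phi((n)))$, and, if needed, new children of intermediate nodes for coordinates inside their support but outside the supports of their current descendants. Since $\phi$ is already a strong reverse-order monomorphism, the enlarged tree $\dom(\psi)$ will give a partial disintegration precisely when the new children's images are pairwise disjointly supported and respect their parents' supports. The success-index-$>N$ goal then reduces to three conditions on $\psi$: approximate summativity $\|\phi(\nu) - \sum_{\nu'} \psi(\nu')\|_p < 2^{-(N+1)}$ at every non-root nonterminal $\nu$, exact pairwise disjoint supports among incomparable terminals of $\dom(\psi)$, and $d(f_j,\langle \ran(\psi)\rangle) < 2^{-(N+1)}$ for $j \le N$.

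\textbf{Classical template and effective search.} Classically, fix a finite window $M \subseteq \N$ so large that the truncations of every $\phi(\nu)$ ($\nu \in S$) and every $f_j$ ($j \le N$) to $M$ in the standard basis $E$ approximate the originals within $2^{-(N+3)}$; then assign each coordinate $n \in M$ to a new child of the deepest node of $S$ whose support contains $n$ (or to a new root-child if no such node exists), with image a suitable scalar multiple of $e_n$. This non-effective construction satisfies all three conditions. To realize it effectively from $F$ --- which lacks access to $E$ --- I would enumerate finite tuples of candidate vectors in $\mathcal{L}_{\Q(i)}(F)$ and test the c.e.\ predicates (i) $\sigma_1$-value between every pair that must be disjointly supported is below a tolerance $\delta$; (ii) the approximate summativity inequalities hold within $\delta$ at every affected node; (iii) the distance from each $f_j$ to the linear span of all candidates together with $\ran(\phi)$ is below $2^{-(N+2)}$. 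Each predicate is c.e.\ because $\sigma_1$, the norm on $\mathcal{L}_{\Q(i)}(F)$, and the distance to a finite-dimensional computable subspace are all computable, and the classical template guarantees that the search halts at a sufficiently small $\delta$.

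\textbf{Main obstacle.} The principal subtlety, and exactly what the sharpening of Lamperti's inequality proved in Section \ref{sec:classical} is designed to handle, is that a search success at tolerance $\delta$ only certifies approximate disjoint support, whereas a partial disintegration requires \emph{exact} disjointness. The sharpening should give a quantitative statement of the form ``$\sigma_1(u,v) < \delta$ implies there exist disjointly supported $\tilde u, \tilde v$ with $\|u-\tilde u\|_p, \|v-\tilde v\|_p$ bounded by an effective function of $\delta$, $\|u\|_p$ and $\|v\|_p$ that tends to $0$ with $\delta$.'' Using this, I would re-run the search at successively finer $\delta_k \to 0$ and define $\psi(\nu)$ as the $F$-computable limit of the corrected outputs; budgeting the corrections and the $2^{-(N+2)}$ tolerances together into the $2^{-(N+1)}$ threshold then yields an $F$-computable partial disintegration with success index exceeding $N$. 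Uniformity in $(N,\text{index of }\phi)$ follows because every step --- enumeration, predicate tests, calibration of $\delta_k$, and limit extraction --- is uniformly effective.
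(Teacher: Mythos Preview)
Your approach is essentially the paper's: classical existence of an extension (your ``template'' is the paper's Theorem~\ref{thm:partial.disintegration.ext}), an effective search over $\mathcal{L}_{\Q(i)}(F)$-tuples testing c.e.\ predicates for approximate summativity, span, and disjointness, and the sharpened Lamperti inequality to pass from approximate to exact disjointness. You have correctly located the crux.

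There is one point that needs tightening. Your plan to ``re-run the search at successively finer $\delta_k$ and take the limit of the corrected outputs'' does not, as stated, produce a convergent sequence: independent searches at different tolerances may return entirely unrelated tuples, and the corrections $\tilde u_k$ are abstract existence objects, not $F$-computable vectors, so you cannot simply output them. The paper resolves this by packaging the Lamperti correction not as a pairwise statement but as a global distance bound (Theorem~\ref{thm:distance.to.ext}): for any $\psi\in\ell^p_{S'-\{\emptyset\}}$, the distance from $\psi$ to the closed set $\mathcal{H}_{\phi,S'}$ of strong reverse-order homomorphisms extending $\phi$ is bounded above by a computable function of $\psi$ that vanishes exactly on $\mathcal{H}_{\phi,S'}$. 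By Proposition~\ref{prop:bounding.principle} this makes $\mathcal{H}_{\phi,S'}$ c.e.\ closed, so one can enumerate rational balls meeting it. The proof then finds a single ball $B(\psi_0;2^{-k})$ that both meets $\mathcal{H}_{\phi,S'}$ and lies inside the c.e.\ open set $\mathcal{M}_{S'}\cap\Delta_{S',N}\cap\mathcal{S}_{S',N}$, and afterwards computes a \emph{nested} Cauchy sequence of rational centers whose shrinking balls all meet $\mathcal{H}_{\phi,S'}$; the limit lies in $\mathcal{H}_{\phi,S'}$ by closedness and in the open set by containment in the first ball. This nested-ball mechanism is what guarantees convergence and is the piece your sketch is missing.
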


\section{Classical world}\label{sec:classical}

\subsection{Results on disintegrations and partial disintegrations}\label{subec:disint}

The following is a preliminary step to proving Theorem \ref{thm:an.chains}.

\begin{proposition}\label{prop:limit.desc.chain}
If $g_0 \succ g_1 \succ \ldots$ are vectors in $\ell^p$, then 
$\lim_n g_n$ exists pointwise and in the $\ell^p$-norm and is the $\preceq$-infimum of $\{g_0, g_1, \ldots \}$.
\end{proposition}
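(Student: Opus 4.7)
The plan is to exploit the ``Pythagorean'' identity that $\preceq$ gives us. Since $g_{n+1} \preceq g_n$, the vectors $g_{n+1}$ and $g_n - g_{n+1}$ are disjointly supported, so $\|g_n\|_p^p = \|g_{n+1}\|_p^p + \|g_n - g_{n+1}\|_p^p$. More generally, for $m \leq n$, $g_n \preceq g_m$ yields
\[
\|g_m - g_n\|_p^p \;=\; \|g_m\|_p^p - \|g_n\|_p^p .
\]
Thus $\|g_n\|_p^p$ is a decreasing sequence of nonnegative reals, hence Cauchy, and the displayed identity forces $(g_n)$ itself to be Cauchy in $\ell^p$. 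Completeness gives $h := \lim_n g_n$ in norm, and norm convergence in $\ell^p$ forces pointwise convergence, so $h(k) = \lim_n g_n(k)$ for every $k$.

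Next I would pin down $h(k)$ coordinatewise. Fix $k$. By the definition of $\preceq$, if $g_{n+1}(k) \neq 0$ then $g_{n+1}(k) = g_n(k)$, so the sequence $n \mapsto g_n(k)$ either stays equal to $g_0(k)$ forever or is eventually $0$. Consequently $h(k) \in \{0, g_0(k)\}$, and in fact $h(k) \in \{0, g_n(k)\}$ for every $n$.

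From this I verify the two properties of the infimum. First, $h \preceq g_n$ for each $n$: whenever $g_n(k) \neq h(k)$ one has $h(k) = 0$ by the preceding paragraph. Second, $h$ is the greatest lower bound: suppose $f \preceq g_n$ for all $n$. For any $k$ with $f(k) \neq 0$, the relation $f \preceq g_n$ forces $f(k) = g_n(k)$ for every $n$; passing to the limit gives $f(k) = h(k)$. Thus $f(k) \neq h(k)$ implies $f(k) = 0$, i.e., $f \preceq h$.

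There is no serious obstacle here; the only nuisance is keeping the definition of $\preceq$ straight (``$f \preceq g$ means $f$ agrees with $g$ where it is nonzero''), which is what makes both the pointwise dichotomy and the Pythagorean identity fall out cleanly. Everything else is packaging: completeness of $\ell^p$ to get the limit, the disjointness-based norm identity to get the Cauchy property, and a one-line coordinate argument for the infimum property.
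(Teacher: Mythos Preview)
Your argument is correct, but it proceeds differently from the paper's. The paper first identifies the pointwise limit explicitly as $g = g_0 \cdot \chi_S$ with $S = \bigcap_n \supp(g_n)$, then upgrades pointwise convergence to $\ell^p$-convergence via the Dominated Convergence Theorem (with $|g_0|$ as dominating function), and finally deduces the infimum property by continuity of the functional $\sigma_1$. You instead obtain norm convergence first, via the telescoping identity $\norm{g_m - g_n}_p^p = \norm{g_m}_p^p - \norm{g_n}_p^p$ and completeness, and then handle both the pointwise description and the infimum property by a direct coordinatewise dichotomy. Your route is more self-contained---it avoids both the Dominated Convergence Theorem and the $\sigma_1$ machinery---while the paper's route stays closer to the disjointness-detecting functional $\sigma_1$ that is used repeatedly elsewhere in the argument.
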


\begin{proof}
 Let 
\[
S = \bigcap_n \supp(g_n).
\]
Set $g = g_0 \cdot \chi_S$.  Since $g_{n+1} \preceq g_n$, it follows that $g$ is the pointwise limit of 
$\{g_n\}_n$.

We claim that $|g_n(t) - g(t)| \leq |g_0(t)|$ for all $t$.  For, either $g_n(t) = g(t)$, or $g(t) = 0$ and $g_n(t) = g_0(t)$.  
By regarding summation as integration with respect to the counting measure, it now follows from the Dominated Convergence Theorem that 
$\lim_{n \rightarrow \infty} \norm{g_n - g}_p = 0$.  

Suppose $h \preceq g_n$ for all $n$.  Thus, as discussed in Subsection \ref{subsec:background.fa}, $\sigma_1(g_n - h, h) = 0$ for all $n$.  Since $\{g_n\}_{n = 0}^\infty$ converges to $g$ in the $\ell^p$-norm, $\sigma_1(g - h, h) = 0$; that is $h \preceq g$.  Thus, 
$g = \inf \{g_0, g_1, \ldots\}$.
\end{proof}

\begin{proof}[Proof of Theorem \ref{thm:an.chains}:]
(\ref{thm:an.chains::itm:infimum}): Suppose $\mathcal{C} \subseteq S$ is an almost norm-maximizing chain.  It follows from Proposition \ref{prop:limit.desc.chain} that the $\preceq$-infimum of $\phi[\mathcal{C}]$ exists; let $g$ denote this infimum.  By way of contradiction, suppose $j_0, j_1 \in \supp(g)$ and $j_0 \neq j_1$.   
Since $\phi$ maps incomparable nodes to disjointly supported vectors, whenever $\nu \in S$, 
the support of $\phi(\nu)$ contains both $j_0$ and $j_1$ if it contains either one of them.  
Since $\phi$ is reverse-order preserving, if $j_0$ and $j_1$ belong to the support of $\phi(\nu)$, 
then $\phi(\nu)(j_0) = g(j_0)$ and $\phi(\nu)(j_1) = g(j_1)$.  But, since $\phi$ is a disintegration, 
the range of $\phi$ generates $\ell^p$- a contradiction.  
Thus, $g$ is either $\mathbf{0}$ or an atom.\\

(\ref{thm:an.chains::itm:partition}): Suppose $\mathcal{C}_0$, $\mathcal{C}_1$, $\ldots$ is a partition of 
$S$ into almost norm-maximizing chains.  By part (\ref{thm:an.chains::itm:infimum}), 
$\inf \phi[\mathcal{C}_n]$ exists for each $n$; let $h_n = \inf \phi [\mathcal{C}_n]$.  

We first claim that for every $j$, there is a $k$ so that $j$ belongs to the support of $h_k$.  
If there is an atom in $\ran(\phi)$ whose support contains $j$, then there is nothing to prove.  So, suppose $j$ does not belong to the support of any atom in $\ran(\phi)$.  

We claim that there is a $\nu \in S$ so that $j \in \supp(\phi(\nu))$ and $|\nu| = 1$.
For otherwise, $j \not \in \supp(g)$ for all $g \in \ran(\phi)$.  But, since $\phi$ is a disintegration, $\ran(\phi)$ generates $\ell^p$- a contradiction.

Since $\phi$ is a disintegration, if $\nu$ is a nonterminal node of $S$, then $\phi(\nu) = \sum_{\nu' \in \nu^+_S} \phi(\nu')$.   It thus follows by induction that for each $s$, $j$ belongs to the support of a $\phi(\nu)$ so that $|\nu| = s$;  since $\phi$ maps incomparable nodes to disjointly supported vectors, $\nu$ is unique and accordingly we denote it by $\nu_s$.   Let $g_s = \phi(\nu_s)$.  
Again, since $\phi$ maps incomparable nodes to disjointly supported vectors, $\nu_{s+1} \supset \nu_s$ for all $s$.  Since $\phi$ is a reverse-order monomorphism, $g_{s+1} \prec g_s$ for all $s$.  Thus, $g_s(j) = g_0(j) \neq 0$ for all $s$.  

Now, for each $s$, let $k_s$ denote the $k$ so that $g_s \in \phi[\mathcal{C}_k]$.  
We claim that $\lim_s k_s$ exists.  By way of contradiction suppose otherwise.  Let $s_0 < s_1 < \ldots$ be the increasing enumeration of all values of $s$ for which $k_s \neq k_{s+1}$.  
Since $\nu_{s_m + 1} \supset \nu_{s_m}$, $\nu_{s_m}$ is a nonterminal node of $S$.
Therefore, since $\mathcal{C}_{k_{s_m}}$ is almost norm-maximizing, it must contain a child of $\nu_{s_m}$ in $S$; let $\mu_m$ denote this child and let $\lambda_m = \phi(\mu_m)$.  Thus, $\lambda_m \prec g_{s_m}$ and the supports of $\lambda_m$ and $g_{s_m + 1}$ are disjoint (since $\mu_m$ and $\nu_{s_m + 1}$ are distinct nodes at the same level of $S$).  In addition, since $\mu_m$ is an almost norm-maximizing child of $\nu_{s_m}$, $|g_0(j)|^p = |g_{s_m+1}(j)|^p \leq \norm{\lambda_m}^p_p + 2^{-s_m}$.  Since $\lambda_{m + r} \preceq g_{s_{m+r}} \preceq g_{s_m + 1}$, the supports of $\lambda_m$ and $\lambda_{m+r}$ are disjoint if $r > 0$.  That is to say, $\supp(\lambda_m) \cap \supp(\lambda_{m'}) = \emptyset$ whenever $m \neq m'$.  Thus, 
$\infty = \sum_m \norm{\lambda_m}^p_p \leq \norm{g_0}^p_p$- a contradiction.
Thus, $k := \lim_s k_s$ exists, and so $j$ belongs to the support of $h_k$.  Moreover, it follows from 
part (\ref{thm:an.chains::itm:infimum}) that $h_k$ is a nonzero scalar multiple of $e_j$.  
It then follows that $h_0, h_1, \ldots$ generate $\ell^p$.  

Finally, we claim that $h_0, h_1, \ldots$ are disjointly supported.  For, suppose $k \neq k'$.  
It suffices to show that there are incomparable nodes $\nu, \nu'$ so that $\nu \in \mathcal{C}_k$ and $\nu' \in \mathcal{C}_{k'}$.  
If there is an integer $s$ so that $\mathcal{C}_k$ and $\mathcal{C}_{k'}$ both contain a node of length $s$, 
then we may choose $\nu$ and $\nu'$ to be these nodes (since $\mathcal{C}_k \cap \mathcal{C}_{k'} = \emptyset$).  
So, suppose there is no $s$ so that $\mathcal{C}_k$ and $\mathcal{C}_{k'}$ both contain a node of length $s$.  Let $\mu$ be the $\subseteq$-minimal node in $\mathcal{C}_k$ and let $\mu'$ be the $\subseteq$-minimal node in $\mathcal{C}_{k'}$.  Let $s = |\mu|$, and let $s' = |\mu'|$.   Thus, $s \neq s'$.  Without loss of generality, assume $s < s'$.  
This entails that $\mathcal{C}_k$ contains a terminal node of $S$; let $\tau$ denote this node and let $t = |\tau|$.  
Thus, $h_k = \phi(\tau)$.  Furthermore, $t < s'$.  Let $\mu''$ denote the length $t$ ancestor of $\mu'$.  Since $\tau$ is a terminal node of $S$, $\tau$ and $\mu''$ are incomparable.  Thus, $\tau$ and $\mu'$ are incomparable.
\end{proof}

For the sake of proving Theorem \ref{thm:comp.partition}, we prove the following existence result.  

\begin{proposition}\label{prop:max}
If $\phi : S \rightarrow \ell^p$ is a disintegration, and if $\nu$ is a nonterminal node of $S$, then 
\[
\max\{ \norm{\phi(\nu')}^p_p\ :\ \nu' \in \nu^+_S\}
\]
exists.
\end{proposition}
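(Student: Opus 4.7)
My plan is to reduce the existence of the maximum to the elementary fact that the nonzero terms of a convergent nonnegative series can only finitely often exceed any positive threshold, so the supremum of those terms is attained.

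First I would invoke summativity together with the disjoint-support property. Since $\phi$ is a disintegration (hence summative) and $\nu$ is nonterminal, we have $\phi(\nu) = \sum_{\nu' \in \nu^+_S} \phi(\nu')$. The children of $\nu$ in $S$ are pairwise incomparable nodes, so $\phi$ sends them to pairwise disjointly supported vectors in $\ell^p$. As noted in Subsection~\ref{subsec:background.fa}, disjointly supported vectors satisfy $\norm{f + g}_p^p = \norm{f}_p^p + \norm{g}_p^p$; extending this to the (possibly infinite) sum above yields
\[
\norm{\phi(\nu)}_p^p = \sum_{\nu' \in \nu^+_S} \norm{\phi(\nu')}_p^p.
\]

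Now I would argue that the supremum of the summands is attained. If $\nu^+_S$ is finite, this is trivial. Otherwise, the left-hand side above is a finite real number, so the nonnegative series on the right converges; in particular, for every $\epsilon > 0$, only finitely many $\nu' \in \nu^+_S$ satisfy $\norm{\phi(\nu')}_p^p > \epsilon$. Let $M = \sup\{\norm{\phi(\nu')}_p^p : \nu' \in \nu^+_S\}$. If $M = 0$, the maximum is $0$ and there is nothing to prove. If $M > 0$, applying the observation with $\epsilon = M/2$ shows that only finitely many children $\nu'$ have $\norm{\phi(\nu')}_p^p > M/2$, and the maximum of $\norm{\phi(\nu')}_p^p$ over this finite set equals $M$.

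I do not anticipate a real obstacle here; the statement is essentially the bookkeeping fact that a $p$-summable sequence of nonnegative reals attains its supremum. The only small point to be careful about is justifying the countable additivity formula for the $p$-th power of the norm, which follows from the disjoint-support identity applied to finite partial sums together with continuity of the norm (and convergence of $\sum_{\nu' \in \nu^+_S} \phi(\nu')$ to $\phi(\nu)$ in $\ell^p$, which is built into the definition of summativity when $\nu^+_S$ is infinite).
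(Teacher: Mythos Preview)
Your proof is correct and follows essentially the same approach as the paper's: both arguments first use summativity and the disjoint-support property to obtain $\sum_{\nu' \in \nu^+_S} \norm{\phi(\nu')}_p^p = \norm{\phi(\nu)}_p^p < \infty$, and then observe that a convergent nonnegative series has only finitely many terms above any positive threshold, so the supremum is attained on a finite subset. Your write-up is slightly more explicit (separating the $M=0$ case and naming the threshold $\epsilon = M/2$), but there is no substantive difference.
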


\begin{proof}
Since $\phi$ is a disintegration,  $\phi(\nu) = \sum_{\nu' \in \nu^+_S} \phi(\nu')$.
Since $\phi$ maps incomparable nodes to disjointly supported vectors it follows that 
\[
\sum_{\nu' \in \nu^+_S} \norm{\phi(\nu')}^p_p = \norm{\phi(\nu)}^p_p < \infty.
\]
Therefore, there is a finite set $\{\nu'_0, \ldots, \nu'_t\} \subseteq \nu^+_S$ so that 
\[
\norm{\phi(\nu')}^p_p \leq \max\{\norm{\phi(\nu'_0)}^p_p, \ldots, \norm{\phi(\nu'_t)}^p_p\}
\]
 whenever $\nu' \in \nu^+_S - \{\nu_0', \ldots, \nu_t'\}$.  Thus, 
 \[
 \sup\{\norm{\phi(\nu')}^p_p\ :\ \nu' \in \nu^+_S\} = \max\{\norm{\nu'_0}^p_p, \ldots \norm{\nu'_t}^p_p\}.
 \]
\end{proof}

For the sake of proving Theorem \ref{thm:comp.partial.disintegration.ext}, we prove the following existence theorem; Theorem \ref{thm:comp.partial.disintegration.ext} will then be demonstrated by a search procedure.  Recall that the success index of a partial disintegration, which was defined in Section \ref{sec:overview}, measures how close a partial disintegration is to being a disintegration.

\begin{theorem}\label{thm:partial.disintegration.ext}
Suppose $F = \{f_0, f_1, \ldots\}$ is a generating set for $\ell^p$.  
If $\phi : S- \{\emptyset\} \rightarrow \ell^p$ is a partial disintegration, and if $N_0 \in \N$, then $\phi$ extends to a 
partial disintegration $\psi$ whose success index (with respect to $F$) is larger than $N_0$.  
\end{theorem}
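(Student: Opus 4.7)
My plan is to enlarge the underlying tree of $\phi$ so that in the extended tree (i) the children of every non-root nonterminal node sum \emph{exactly} to that node's value, and (ii) for every coordinate $n$ in a finite set $I \subseteq \N$ chosen large enough that $\norm{f_j - f_j \cdot \chi_I}_p < 2^{-(N_0+1)}$ for every $j \leq N_0$, some nonzero scalar multiple of $e_n$ lies in $\ran(\psi)$. Such an $I$ exists because $f_j \in \ell^p$. Properties (i) and (ii) together with this tail estimate will force the success index above $N_0$.

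Write $g_\mu = \phi(\mu)$. The extension is obtained by attaching new children in three cases. \textbf{(a)} For each non-root nonterminal $\nu \in S$ with existing children $\nu_1,\ldots,\nu_k$ in $S$, set $R_\nu := \supp(g_\nu) \setminus \bigcup_j \supp(g_{\nu_j})$; for each $n \in I \cap R_\nu$ attach to $\nu$ a new child with value $g_\nu(n) e_n$, and if $g_\nu \cdot \chi_{R_\nu \setminus I} \neq 0$ attach one further `residual' child with this value. \textbf{(b)} For each terminal $\tau \in S$, attach to $\tau$ a new child with value $g_\tau(n) e_n$ for each $n \in I \cap \supp(g_\tau)$; and if $g_\tau \cdot \chi_{\supp(g_\tau) \setminus I} \neq 0$ attach one further residual child with this value. \textbf{(c)} For each $n \in I \setminus V_1$, where $V_1 := \bigcup_{(a) \in S \cap \omega^1} \supp(g_{(a)})$, attach a new child of the root with value $e_n$. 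Each added child's value is $\preceq$ its parent's value and disjointly supported from its siblings', and a routine case analysis shows that no added value coincides with a pre-existing one, so $\psi$ is a partial disintegration extending $\phi$.

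Now verify the success-index conditions with $N := N_0 + 1$. The non-root nonterminal nodes of the extended tree are precisely those of $S$ together with the terminals of $S$ to which case \textbf{(b)} attached at least one child; at each of these, the children in $\psi$ sum exactly to $g_\nu$ by construction, so the summativity gap is $0$. The approximation condition rests on the following partition of $\N$, which is the one genuine observation of the proof:
\[
\N \;=\; (\N \setminus V_1) \;\sqcup\; U \;\sqcup\; \bigsqcup_{\nu} R_\nu,
\]
where $U := \bigcup_{\tau \text{ terminal in }S} \supp(g_\tau)$ and $\nu$ ranges over non-root nonterminal nodes of $S$. Indeed, any $n \in V_1$ lies in $\supp(g_{(a)})$ for a unique level-$1$ node $(a)$, and by disjointness of supports of incomparable nodes there is, at each deeper level, at most one descendant of $(a)$ whose support contains $n$; this chain either reaches a terminal of $S$ (so $n \in U$) or stops at some non-root nonterminal $\nu$ (so $n \in R_\nu$). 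Consequently each $n \in I$ is handled by exactly one of \textbf{(a)}, \textbf{(b)}, \textbf{(c)}, whence $e_n \in \langle \ran(\psi) \rangle$ and $\sum_{n \in I} f_j(n) e_n \in \langle \ran(\psi) \rangle$ lies within $2^{-(N_0+1)}$ of $f_j$ for every $j \leq N_0$. Thus the success index of $\psi$ is at least $N_0 + 1$. The main obstacle is essentially the bookkeeping; the only conceptual step is the partition of $\N$ identified above.
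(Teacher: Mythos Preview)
Your approach is close to the paper's but differs in a pleasant way: you aim for \emph{exact} summativity at every non-root nonterminal node by adding residual children, whereas the paper settles for summativity error below $2^{-N_0}$.  The paper's construction is correspondingly leaner: it chooses $N_1$ so large that the $f_j$'s \emph{and} every $\phi(\nu)$ lie within $2^{-N_0}$ of $\langle e_0,\ldots,e_{N_1}\rangle$, and for each $k\le N_1$ it attaches a single new leaf with value $\phi(\nu_k)(k)\,e_k$ below the $\subseteq$-maximal node $\nu_k$ whose support contains $k$ (or a leaf $e_k$ below the root if no such node exists).  The tail estimate on $\phi(\nu)$ then bounds the summativity gap, so no residual children are needed.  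Your partition $\N=(\N\setminus V_1)\sqcup U\sqcup\bigsqcup_\nu R_\nu$ is correct and is precisely the observation that each coordinate has a unique deepest such $\nu_k$.

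There is, however, a real gap in your injectivity claim.  The ``routine case analysis'' does not cover the situation in which a terminal $\tau\in S$ has $\supp(g_\tau)=\{n\}$ with $n\in I$: case~(b) then attaches a single child of value $g_\tau(n)\,e_n=g_\tau$, and $\psi$ fails to be a monomorphism.  The same failure occurs whenever $I\cap\supp(g_\tau)=\emptyset$ but $g_\tau\neq\mathbf 0$, since only the residual child (again equal to $g_\tau$) is attached.  The repair is easy---simply omit case~(b) at terminals with singleton support (such a $g_\tau$ already furnishes the needed scalar multiple of $e_n$) and do not attach a residual when it would be the sole child---but it is not the routine matter you suggest.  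The paper addresses exactly this edge case by only attaching a new child below $\nu_k$ when $\#\supp(\phi(\nu_k))\ge 2$.
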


\begin{proof}
There is a nonnegative integer $N_1$ so that $d(f_j, \langle e_0, \ldots, e_{N_1}\rangle) < 2^{-N_0}$ 
whenever $0 \leq j < N_0$ and so that 
$d(\phi(\nu), \langle e_0, \ldots, e_{N_1}\rangle) < 2^{-N_0}$ whenever $\nu \in S - \{\emptyset\}$.  

When $0 \leq k \leq N_1$, let $\nu_k = \emptyset$ if 
$k \not \in \bigcup_{\nu \in S} \supp(\phi(\nu))$; otherwise let $\nu_k$ denote the $\subseteq$-maximal node in $S$ 
so that $k \in \supp(\phi(\nu))$.  

Intuitively, we define $\psi$ so that its range includes nonzero scalar multiples of each of $e_0, \ldots, e_{N_1}$.  We first define the domain of $\psi$.  Let 
\[
S' = S \cup \{\nu_k\cat(k + \# S)\ :\ 0 \leq k \leq N_1\ \wedge\ (\nu_k = \emptyset\ \vee\ \#\supp(\phi(\nu_k)) \geq 2)\}.
\]
(Here, $\# A$ denotes the cardinality of $A$, and ${\ }\cat$ denotes concatenation.)  Let $\psi(\nu) = \phi(\nu)$ if $\nu \in S - \{\emptyset\}$.  
Suppose $\nu_k\cat(k + \#S) \in S'$.  Let 
\[
\psi(\nu\cat(k + \#S)) = \left\{ \begin{array}{cc}
								\phi(\nu_k) \cdot e_k & \mbox{if $\nu_k \neq \emptyset$}\\
								e_k & \mbox{otherwise}\\
								\end{array}
								\right.
\]
Thus, by construction, $\psi$ is a partial disintegration, and $\psi$ extends $\phi$.  

We claim that the success index of $\psi$ is at least as large as $N_0$.  
For, by construction, $e_0, \ldots, e_{N_1} \in \langle \ran(\psi) \rangle$.  
Thus, by choice of $N_1$, $d(f_j, \langle \ran(\psi) \rangle) < 2^{-N_0}$ whenever $0 \leq j < N_0$.  
Suppose $\nu$ is a nonterminal node of $S'$.  Thus, by definition of $S'$, $\nu \in S$.  
We show that 
\begin{equation}
\norm{\psi(\nu) - \sum_{\nu' \in \nu^+_{S'} } \psi(\nu')}_p < 2^{-N_0}.\label{ineqn:sum}
\end{equation}
Since $\nu \in S$, $\psi(\nu) = \phi(\nu)$.
By choice of $N_1$, $\norm{\phi(\nu) - \phi(\nu)\cdot \chi_{\{0, \ldots, N_1\}}}_p < 2^{-N_0}$.
By definition of $S'$, whenever $0 \leq k < N_1$ and $k \in \supp(\phi(\nu))$, $k$ belongs to the support 
of $\phi(\nu')$ for some child $\nu'$ of $\nu$ in $S'$; furthermore $\phi(\nu) \cdot e_k \preceq \psi(\nu')$.
The inequality (\ref{ineqn:sum}) follows.
\end{proof}

\subsection{On the distance to the nearest strong reverse-order homomorphism}\label{subsec:distance.strong}

Let $S$ be a finite nonempty subset of $\omega^{<\omega}$.  Define $\ell^p_S$ to be the space of all functions that map $S$ into $\ell^p$.  
When $\phi \in \ell^p_S$, define $\norm{\phi}_S$ to be $\max \{\norm{\phi(\nu)}_p\ :\ \nu \in S\}$.  
Thus, $\norm{\ }_S$ is a norm on $\ell^p_S$ under which $\ell^p_S$ is a Banach space.  

Suppose $\phi \in \ell^p_{S - \{\emptyset\}}$ is a partial disintegration, and let $S' \supseteq S$ be a finite subtree of $\omega^{< \omega}$.  
Define $\mathcal{H}_{\phi, S'}$ to be the set of all strong reverse-order homomorphisms 
$\psi \in \ell^p_{S' - \{\emptyset\}}$ so that $\psi(\nu)  =\phi(\nu)$ whenever $\nu$ is a non-root node of $S$.  Thus, $\mathcal{H}_{\phi, S'}$ is a closed subset of 
$\ell^p_{S'- \{\emptyset\}}$.  For the sake of a search procedure we will employ in the proof of Theorem \ref{thm:comp.partial.disintegration.ext}, we wish to find a reasonable upper bound on $d(\psi, \mathcal{H}_{\phi, S'})$ in terms of $\phi, \psi$.   

When $p \neq 2$, set $c_p = |4 - 2\sqrt{2}^p|^{-1}$. When $z,w \in \C$ set $\sigma(z,w)  =  c_p \sigma_1(z,w)$.
As a first step toward bounding $d(\psi, \mathcal{H}_{\phi, S'})$ above, we prove the following sharpening 
of an inequality due to J. Lamperti \cite{Lamperti.1958}.

\begin{theorem}\label{thm:lamperti}
Suppose $p \geq 1$ and $p \neq 2$.  Then, 
\begin{equation}
\min\{|z|^p, |w|^p\} \leq \sigma(z,w) \label{ineq:lamperti}
\end{equation}
for all $z,w \in \C$.
Furthermore, if $1 \leq p < 2$, then 
\[
2|z|^p + 2|w|^p - |z+w|^p - |z-w|^p \geq 0
\]
and if $2 < p$ then 
\[
2|z|^p + 2|w|^p - |z+w|^p - |z-w|^p \leq 0.
\]
\end{theorem}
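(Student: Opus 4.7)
The plan is to derive both assertions from the single fact that $x\mapsto x^{p/2}$ is convex on $[0,\infty)$ when $p>2$ and concave when $1\le p<2$.

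\emph{Sign.} Write $a=|z|^2+|w|^2$ and $b=2\Re(z\bar w)$, so $|z\pm w|^2=a\pm b$ and hence
\[
|z+w|^p+|z-w|^p=(a+b)^{p/2}+(a-b)^{p/2}.
\]
Convexity (resp.\ concavity) of $x\mapsto x^{p/2}$ gives $(a+b)^{p/2}+(a-b)^{p/2}\ge 2a^{p/2}$ when $p>2$, with the reverse inequality when $1\le p<2$. Combining this with the standard comparison of $\ell^p$- and $\ell^2$-norms on $\R^2$, namely $(|z|^2+|w|^2)^{p/2}\ge |z|^p+|w|^p$ for $p\ge 2$ and reversed for $p\le 2$, yields the asserted sign of $2|z|^p+2|w|^p-|z+w|^p-|z-w|^p$.

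\emph{Reducing the main inequality.} Both $\sigma_1(z,w)$ and $\min\{|z|^p,|w|^p\}$ are $p$-homogeneous, symmetric in $(z,w)$, and invariant under $(z,w)\mapsto(\zeta z,\zeta w)$ for unimodular $\zeta$, so I may assume $z=1$ and $w=se^{i\beta}$ with $s\in[0,1]$; the case $s=0$ is trivial. Setting $\rho=s\cos\beta\in[-s,s]$, we have $|1\pm w|^2=(1+s^2)\pm 2\rho$, and the same convexity/concavity argument applied in the variable $\rho$ shows that $(1+s^2-2\rho)^{p/2}+(1+s^2+2\rho)^{p/2}$ is minimized (resp.\ maximized) at $\rho=0$ when $p>2$ (resp.\ $1\le p<2$). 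Combined with the sign result, this forces $|2(|z|^p+|w|^p)-(|z+w|^p+|z-w|^p)|$ to attain its minimum over $\beta$ at $\rho=0$, i.e.\ at $w=\pm is$. So it suffices to establish
\[
s^p\ \le\ 2c_p\,\bigl|(1+s^2)^{p/2}-(1+s^p)\bigr|,\qquad 0<s\le 1.
\]

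\emph{The one-variable residue.} Dividing by $s^p$ and substituting $u=s^{-2}\ge 1$, the inequality becomes
\[
\bigl|(u+1)^{p/2}-u^{p/2}-1\bigr|\ \ge\ |2^{p/2}-2|\ =\ (2c_p)^{-1}.
\]
Let $\phi(u)=(u+1)^{p/2}-u^{p/2}$; then $\phi'(u)=\tfrac{p}{2}\bigl[(u+1)^{p/2-1}-u^{p/2-1}\bigr]$ is positive for $p>2$ and negative for $1\le p<2$, so $\phi$ is strictly monotone on $[1,\infty)$. Since $\phi(1)-1=2^{p/2}-2$ has the same sign as $\phi'$, the quantity $\phi(u)-1$ keeps constant sign on $[1,\infty)$, so $|\phi(u)-1|$ attains its minimum at $u=1$ with value $|2^{p/2}-2|$, finishing the argument. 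The main obstacle is the reduction to the extremal configuration $w=\pm iz$: this is where the sharp constant $c_p=|4-2\sqrt{2}^{\,p}|^{-1}$ originates and what distinguishes the result from Lamperti's non-sharp bound; once that reduction is in place, the residual estimate is the elementary monotonicity check for $\phi$ above.
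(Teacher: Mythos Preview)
Your argument is correct, and it takes a genuinely different route from the paper's. The paper normalizes (as you do) and then treats the resulting two-variable function $f_p(\theta,t)$ on $[0,\pi]\times[1,\infty)$ by direct multivariable calculus: it computes $\partial f_p/\partial\theta$ and $\partial f_p/\partial t$, locates the interior critical line $\theta=\pi/2$, checks that $\partial f_p/\partial t\ge 0$ along $\theta=0,\pi/2,\pi$ (the boundary case $\theta=0$ requiring a separate convexity/concavity appeal to $x\mapsto x^{p-1}$), and then compares the boundary values $f_p(0,1)$, $f_p(\pi/2,1)$, $f_p(\pi,1)$. The sign assertion falls out as a by-product of $f_p\ge|4-2\sqrt 2^{\,p}|>0$.

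Your proof replaces all of this with a single idea: convexity/concavity of $x\mapsto x^{p/2}$. That handles the sign cleanly via the parallelogram-type identity $|z+w|^p+|z-w|^p=(a+b)^{p/2}+(a-b)^{p/2}$ together with the $\ell^2$--$\ell^p$ comparison, reduces the angular variable to the extremal $\rho=0$ without any derivative computation, and then the substitution $u=s^{-2}$ turns the radial problem into a monotonicity check for $\phi(u)=(u+1)^{p/2}-u^{p/2}$. The upshot is a shorter and more conceptual argument that never splits into boundary cases; the paper's approach, by contrast, is more hands-on and makes the location of the extremum $(\theta,t)=(\pi/2,1)$ emerge from an explicit critical-point analysis rather than from symmetry and convexity.
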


\begin{proof}
Without loss of generality, assume $0 < |z| \leq |w|$.  Set $w/z = t e^{i \theta}$ where $t \geq 1$.   Then, (\ref{ineq:lamperti}) reduces to 
\[
1 \leq \frac{|2 + 2t^p - |1 + t e^{i \theta}|^p - |1 - te^{i\theta}|^p|}{|4 - 2\sqrt{2}^p|}.
\]
This leads to consideration of the function
\[
f_p(\theta,t) := \left\{\begin{array}{cc}
2 + 2t^p - |1 + t e^{i \theta}|^p - |1 - t e^{i \theta}|^p & 1 \leq p < 2\\
|1 + t e^{i\theta}|^p + |1 - t e^{i \theta}|^p - 2t^p - 2 & p > 2\\
\end{array}
\right.
\]
We show that 
\[
\min_{t \geq 1} f_p(\theta, t) = |4 - 2 \sqrt{2}^p|.
\]
We note that $f_p(\theta + \pi,t) = f_p(\theta, t)$.  So, we restrict attention to values of $\theta$ between $0$ and $\pi$.  We use basic multivariable calculus to minimize $f_p(\theta,t)$ in the region $[0,\pi] \times [1, \infty)$.  To this end, we first note that 
\[
\frac{\partial}{\partial t} |1 \pm t e^{i \theta}| = \frac{t \pm \cos(\theta)}{|1 \pm t e^{i\theta}|} 
\]
and that
\[
\frac{\partial}{\partial \theta}  |1 \pm t e^{i \theta}| = \frac{\mp t \sin(\theta)}{|1 \pm t e^{i \theta}|}
\]
It follows that when $1 \leq p < 2$: 
\begin{eqnarray*}
\frac{\partial f_p}{\partial t}(\theta,t) & = & 2pt^{p-1} - p[(t - \cos(\theta))|1 - te^{i \theta}|^{p-2} + (t + \cos(\theta))|1 + t e^{i \theta}|^{p-2}]\\
\frac{\partial f_p}{\partial \theta}(\theta,t) & = & pt\sin(\theta)[ |1 + te^{i \theta}|^{p-2} - |1 - te^{i \theta}|^{p-2}].
\end{eqnarray*}
The signs are reversed when $p > 2$. 

So, when $0 < \theta_0 < \pi$ and $t_0 \geq 1$, 
\begin{eqnarray*}
\frac{\partial f_p}{\partial \theta}(\theta_0, t_0) = 0 & \Leftrightarrow & |1 + t_0 e^{i \theta_0}| = |1 - te^{i \theta_0}|\\
& \Leftrightarrow & \theta_0 = \pi/2.
\end{eqnarray*}
We now claim that $\frac{\partial f_p}{\partial t}(\pi/2, t_0) > 0$ when $t_0 \geq 1$.  We first consider the case $1 \leq p < 2$.  We have
\[
\frac{\partial f_p}{\partial t}(\pi/2, t) = 2pt^{p-1} - 2pt|1 + ti|^{p-2}.
\]
Since $t < |1 + ti|$ and $p - 2 < 0$, $t^{p-2} > |1 + ti|^{p-2}$.  Thus, $\frac{\partial f_p}{\partial t}(\pi/2, t_0) > 0$.  The case $2 < p$ is symmetric; the signs are merely reversed and $p - 2 > 0$.

We next claim that $\frac{\partial f_p}{\partial t}(0,t) \geq 0$ if $t \geq 1$.  We first consider the case $1 \leq p < 2$.  In this case the claim reduces to 
\[
2 \geq \left( \frac{t-1}{t} \right)^{p-1} + \left( \frac{t+1}{t} \right)^{p-1}.
\]
Since $0 \leq p-1 < 1$, $x \mapsto x^{p-1}$ is concave.  Thus, 
\[
1 = \left( \frac{\frac{t-1}{t} + \frac{t+1}{t}}{2} \right)^{p-1} \geq \frac{1}{2} \left[\left(\frac{t-1}{t} \right)^{p-1} + \left( \frac{t + 1}{t} \right)^{p-1}\right].
\]
This verifies the claim when $1 \leq p < 2$.  The case $2 < p$ is symmetric: signs are reversed and the function $x \mapsto x^{p-1}$ is convex.

Thus, $\frac{\partial f_p}{\partial t} (\pi, t) \geq 0$ if $1 \leq t$.  

So, let $t_0 > 1$, and let $R$ denote the rectangle $[0, \pi] \times [1, t_0]$.  It follows from what has just been shown that the
minimum of $f_p$ on $R$ is achieved on the lower line segment $[0, \pi] \times \{1\}$.  Moreover, it is achieved at one of the points $(0,1)$, $(\pi/2, 1)$, $(\pi,1)$.  $f_p(0,1) = f_p(0, \pi) = |2 - 2^p|$ and $f_p(0, \pi/2) = |4 - 2 \sqrt{2}^p|$.  Since $p \neq 2$, it follows that $|4 - 2^p| > |4 - 2\sqrt{2}^p|$.  Thus, the minimum of $f_p$ on $R$ is $|4 - 2 \sqrt{2}^p|$.  Since $t_0$ can be any number larger than $1$, the minimum of $f_p$ on $[0,\pi] \times [1, \infty)$ is $|4 - 2\sqrt{2}^p|$.

The theorem now follows.
\end{proof}

When $\psi \in \ell_S^p$, set 
\[
\sigma(\psi)  =  \sum_{\nu | \nu'} \sigma(\psi(\nu), \psi(\nu')) + \sum_{\nu' \supset \nu} \sigma(\psi(\nu') - \psi(\nu), \psi(\nu'))
\]
where $\nu, \nu'$ range over all nodes of $S$ and $\nu | \nu'$ denotes that $\nu$ and $\nu'$ are incomparable.  
Note that $\sigma : \ell^p_S \rightarrow [0, \infty)$ is continuous and $\sigma(\psi) = 0$ if and only if 
$\psi$ is a strong order homomorphism.  The following is the main result of this subsection.

\begin{theorem}\label{thm:distance.to.ext}
Suppose $\phi \in \ell^p_{S - \{\emptyset\}}$ is a partial disintegration.  Suppose $\psi \in \ell^p_{S' - \{\emptyset\}}$ where 
$S' \supseteq S$ is a finite subtree of $\omega^{<\omega}$ so that each node of $S'$ extends a node of $S$.  Then, 
\[
d(\psi, \mathcal{H}_{\phi, S'})^p \leq  \norm{\phi|_{S - \{\emptyset\}} - \psi|_{S - \{\emptyset\}} }^p_{S - \{ \emptyset\}} + 2^p\sigma(\phi \cup \psi|_{S' - S}).
\]
\end{theorem}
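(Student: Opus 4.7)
The plan is to construct an element $\psi^* \in \mathcal{H}_{\phi, S'}$ coordinate by coordinate and to bound $\|\psi - \psi^*\|_{S'-\{\emptyset\}}^p$ by the right-hand side of the theorem. Write $\xi = \phi \cup \psi|_{S'-S}$ for brevity. I set $\psi^*(\nu) = \phi(\nu)$ for $\nu \in S - \{\emptyset\}$ (as required for membership in $\mathcal{H}_{\phi, S'}$). For $\nu \in S' - S$, I define $\psi^*(\nu)$ coordinate-wise: at each $t \in \N$, let $P_t = \{\mu \in S - \{\emptyset\} : \phi(\mu)(t) \neq 0\}$; since $\phi$ is a partial disintegration, $P_t$ is a chain in $S-\{\emptyset\}$ on which $\phi(\cdot)(t)$ takes a single value $v_t$. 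Because $S$ is closed under prefixes, any $\nu \in S' - S$ is either incomparable to some element of $P_t$ or a strict descendant of $\max P_t$; in the first case the disjoint-support requirement forces $\psi^*(\nu)(t) = 0$, and in the second I greedily choose $\psi^*(\nu)(t) \in \{0, v_t\}$ to minimize the coordinate cost, making consistent choices along descent chains so that $\psi^*$ remains a strong reverse-order homomorphism. When $P_t = \emptyset$, I set $\psi^*(\nu)(t) = 0$ for $\nu \in S' - S$.

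I then bound $\|\psi - \psi^*\|_{S'-\{\emptyset\}}^p = \max_\nu \|\psi(\nu) - \psi^*(\nu)\|_p^p$. For $\nu \in S - \{\emptyset\}$, $\|\psi(\nu) - \psi^*(\nu)\|_p^p = \|\psi(\nu) - \phi(\nu)\|_p^p \leq \|\phi|_{S-\{\emptyset\}} - \psi|_{S-\{\emptyset\}}\|_{S-\{\emptyset\}}^p$, accounting for the first term. For $\nu \in S' - S$, I bound $\|\xi(\nu) - \psi^*(\nu)\|_p^p$ coordinate-wise: the cost at coordinate $t$ is either $|\xi(\nu)(t)|^p$ (forced-zero) or $\min(|\xi(\nu)(t)|^p, |\xi(\nu)(t) - v_t|^p)$ (greedy choice). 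Each such coordinate cost is bounded using Lamperti's inequality (Theorem~\ref{thm:lamperti}) applied to an appropriate pair from $\xi$: an incomparable pair $(\nu, \mu)$ with $\mu \in P_t$ handles the forced-zero case, and a comparable pair $(\nu, \mu)$ with $\mu \in P_t$ an ancestor of $\nu$ handles the greedy case.

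The main obstacle is that Lamperti's inequality bounds only the minimum, $\min(|z|^p, |w|^p) \leq \sigma(z,w)$, whereas the forced-zero quantity $|\xi(\nu)(t)|^p$ that I must control can be the maximum of the pair in some coordinates. In those coordinates I combine Lamperti with the elementary inequality $|a|^p \leq 2^{p-1}(|a - b|^p + |b|^p)$ to transfer the bound to a quantity controlled by a $\sigma$-term, incurring a factor of $2^{p-1}$; together with an additional factor of $2$ arising from the choice step in the greedy case, this accounts for the $2^p$ factor in the statement. Summing the coordinate-wise bounds over $t$ gives $\|\xi(\nu) - \psi^*(\nu)\|_p^p \leq 2^p \sigma(\phi \cup \psi|_{S'-S})$ for each $\nu \in S' - S$, and combining with the first-term bound yields the claim.
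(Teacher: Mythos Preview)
Your construction has a genuine gap in the case $P_t = \emptyset$, and your bounding argument cannot be repaired there. Take $S = \{\emptyset, (0)\}$ with $\phi((0)) = e_0$, and $S' = \{\emptyset, (0), (1)\}$ with $\psi((0)) = e_0$, $\psi((1)) = M e_1$. Then $\xi((0)) = e_0$ and $\xi((1)) = M e_1$ are disjointly supported, so $\psi$ itself lies in $\mathcal{H}_{\phi,S'}$ and both terms on the right-hand side vanish (in particular $\sigma(\xi) = 0$, since $\sigma_1(e_0, M e_1) = 0$). But at coordinate $t = 1$ you have $P_1 = \emptyset$, so your rule forces $\psi^*((1))(1) = 0$, giving $\|\psi((1)) - \psi^*((1))\|_p^p = M^p$. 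There is no $\sigma$-term at coordinate $1$ to absorb this cost: every $\sigma(z,w)$ with $z = 0$ or $w = 0$ vanishes, and your proposed pairs $(\nu,\mu)$ all require $\mu \in P_1 = \emptyset$. The same obstruction shows that tethering the construction to the support of $\phi$ alone cannot work; the values of $\psi$ on $S' - S$ must themselves be allowed to seed nonzero branches.

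The paper avoids this by using a different selection rule that is not anchored in $\phi$. It sets a single coordinate-wise threshold $\hat\sigma(\psi_0)(n)$ equal to the sum over all pairs (incomparable and ancestor/descendant) of the Lamperti minima at $n$, and then defines $\psi_1(\nu)(n)$ recursively: if $|\psi_0(\nu)(n)|^p$ exceeds the threshold, inherit the parent's value (or, for root-level $\nu \in S' - S$, keep $\psi(\nu)(n)$); otherwise set it to $0$. Because $\hat\sigma(\psi_0)(n)$ already dominates every pairwise minimum, at most one of any two incomparable nodes can pass the threshold, which simultaneously guarantees the strong reverse-order property and gives the uniform cost bound $|\psi(\nu)(n) - \psi_1(\nu)(n)|^p \leq 2^p \hat\sigma(\psi_0)(n)$; summing over $n$ yields the factor $2^p \sigma(\psi_0)$. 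Your ``transfer'' inequality $|a|^p \leq 2^{p-1}(|a-b|^p + |b|^p)$ does not supply a substitute, since in the problematic coordinates no candidate $b$ has $|a-b|^p$ controlled by a term appearing in $\sigma(\xi)$.
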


\begin{proof}  
Let $\psi_0 = \phi \cup \psi|_{S' - S}$.
Set
\begin{eqnarray*}
\hat{\sigma}(\psi_0)(n) & = & \sum_{\nu | \nu'} \min\{|\psi_0(\nu)(n)|^p, |\psi_0(\nu')(n)|^p\}\\
& & + \sum_{\nu' \supset \nu} \min\{|\psi_0(\nu')(n) - \psi_0(\nu)(n)|^p, |\psi_0(\nu')(n)|^p\}
\end{eqnarray*}
where $\nu, \nu'$ range over the nodes of $S' - \{\emptyset\}$.  
Thus, by Theorem \ref{thm:lamperti}, $\sum_n \hat{\sigma}(\psi_0)(n) \leq \sigma(\psi_0)$.

We now construct $\psi_1$.
If $\nu \in S - \{\emptyset\}$, set $\psi_1(\nu) = \phi(\nu)$.  If $\nu \in S' - S$, and if $n \in \N$, set 
\[
\psi_1(\nu)(n) = \left\{ \begin{array}{cc}
							\psi_1(\nu^-)(n) & \mbox{if $|\psi_0(\nu)(n)|^p > \hat{\sigma}(\psi_0)(n)$ and $\nu^- \neq \emptyset$}\\
							\psi(\nu)(n) & \mbox{if $|\psi_0(\nu)(n)|^p > \hat{\sigma}(\psi_0)(n)$ and $\nu^- = \emptyset$}\\
							0 & \mbox{otherwise.} \\
							\end{array}
							\right.
\]

By construction $\psi_1$ is a reverse-order homomorphism.  We show it is a strong reverse-order homomorphism.  
Suppose $\nu, \nu' \in S'$ are incomparable.  Since $\psi_1$ is a reverse-order homomorphism, it suffices to consider the case where $\nu, \nu' \not \in S$.  Suppose $\psi_1(\nu)(n) \neq 0$.  
Then, $|\psi_0(\nu)(n)|^p > \hat{\sigma}(\psi_0)(n)$.  So, 
$|\psi_0(\nu)(n)|^p > |\psi_0(\nu')(n)|^p$.  Thus, $\hat{\sigma}(\psi_0)(n) \geq |\psi_0(\nu')(n)|^p$.  
Hence, $\psi_1(\nu')(n) = 0$.  Thus, $\psi_1(\nu)$ and $\psi_1(\nu')$ are disjointly supported. 

If $\nu \in S - \{\emptyset\}$, then $\norm{\psi(\nu) - \psi_1(\nu)}_p = \norm{\psi(\nu) - \phi(\nu)}_p$.  
Suppose $\nu \in S' - S$.  Let $n \in \N$.  It suffices to show that 
$|\psi(\nu)(n) - \psi_1(\nu)(n)|^p \leq 2^p \hat{\sigma}(\psi_0)(n)$.  
We first consider the case $\psi_1(\nu)(n) = 0$.  
We can assume $|\psi(\nu)(n)|^p > \hat {\sigma}(\psi_0)(n)$.  
Thus, there exists $\mu \subset \nu$ so that $\psi_1(\mu)(n) = 0$; take the least such $\mu$.  
Therefore $|\psi_0(\mu)(n)|^p \leq \hat{\sigma}(\psi_0)(n)$.  
On the other hand, $|\psi_0(\mu)(n) - \psi_0(\nu)(n)|^p \leq \hat{\sigma}(\psi_0)(n)$.  
Therefore, $|\psi(\nu)(n)|^p \leq 2^p \hat{\sigma}(\psi_0)(n)$.  
Now, suppose $\psi_1(\nu)(n) \neq 0$.  Then, $|\psi(\nu)(n)|^p > \hat{\sigma}(\psi_0)(n)$.  
Let $\nu_0$ denote the maximum prefix of $\nu$ that belongs to $S$ or has length $1$.  Then, 
$\psi_1(\nu)(n) = \psi_0(\nu_0)(n)$.  
However, $|\psi(\nu)(n) - \psi(\nu_0)(n)|^p \leq \hat{\sigma}(\psi_0)(n)$.  
\end{proof}

We note that the hypothesis that each node of $S'$ extends a node in $S$ is not superfluous.  
For, let $p = 1$.  
Choose $x > 0$ so that $2\sigma(x, 1) < 1$.
Let $S = \{(0)\}$, and let $\phi = \{(0), x e_0)\}$.  Let $S' = \{(0), (1)\}$, and let 
$\psi = \phi \cup \{ ((1), e_0)\}$.  If $\psi' : S' \rightarrow \ell^1$ is a strong reverse-order homomorphism
that extends $\phi$, then $\norm{\psi' - \psi}_{S'} \geq 1 > 2\sigma(\psi)$.

\section{Computable world}\label{sec:computable}

\subsection{Proof of Theorem \ref{thm:comp.partition}}

Suppose $\phi : S \rightarrow \ell^p$ is a disintegration that is computable with respect to $F$.   
Since $\phi$ is computable, $S$ is c.e.; fix a computable enumeration of $S$, $\{S_t\}_{t \in \N}$.  
We can choose this enumeration so that each $S_t$ is a finite subtree of $\omega^{<\omega}$.

It suffices to show that from a nonterminal node $\mu \in S$ we can compute an almost-norm maximizing 
child of $\mu$ in $S$.  We base the proof of this claim on a sequence of lemmas as follows.  
When $\nu \in S_t$, let 
 $\nu^+_t$ abbreviate $\nu^+_{S_t}$.

\begin{lemma}\label{lm:max.norm.below}
If $\nu$ is a non-root and nonterminal node of $S$, then there are infinitely many numbers $t$ so that 
\begin{equation}
\norm{\phi(\nu)}_p^p - \sum_{\mu \in \nu^+_t} \norm{\phi(\mu)}_p^p < \max\{ \norm{\phi(\mu)}_p^p\ :\ \mu \in \nu^+_t\} \label{ineq:max.norm.below}
\end{equation}
When $t$ is such a number, 
\[
\max\{\norm{\phi(\mu)}_p^p\ :\ \mu \in \nu_t^+\} = \max\{ \norm{\phi(\mu)}_p^p\ :\ \mu \in \nu^+_S\}.
\]
\end{lemma}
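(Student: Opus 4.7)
The plan is to exploit the identity $\norm{\phi(\nu)}_p^p = \sum_{\mu \in \nu^+_S} \norm{\phi(\mu)}_p^p$, which follows from summativity of $\phi$ together with the fact that $\phi$ maps incomparable nodes to disjointly supported vectors (so distinct children of $\nu$ have pairwise disjoint supports, and the $\ell^p$-norm is $p$-additive on such sums). Under this identity, the left-hand side of (\ref{ineq:max.norm.below}) becomes the tail $\sum_{\mu \in \nu^+_S \setminus \nu^+_t} \norm{\phi(\mu)}_p^p$ of an absolutely convergent series, so the content of the lemma is really a tail estimate on this series.

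First I would invoke Proposition \ref{prop:max} to pick a child $\mu^* \in \nu^+_S$ realizing $M := \max\{\norm{\phi(\mu)}_p^p : \mu \in \nu^+_S\}$, and note $M > 0$: since $\nu$ is nonterminal, $\phi(\nu) \neq \mathbf{0}$ (else any proper descendant $\mu$ would satisfy $\phi(\mu) \preceq \phi(\nu) = \mathbf{0}$, giving two nodes with image $\mathbf{0}$ and contradicting injectivity of $\phi$), so some child must have nonzero norm. Arranging the enumeration so that $S_t \subseteq S_{t+1}$ as is allowed, I would next choose a finite $F \subseteq \nu^+_S$ with $\mu^* \in F$ and $\sum_{\mu \in \nu^+_S \setminus F} \norm{\phi(\mu)}_p^p < M$, which is possible by convergence of the series. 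Then for every $t$ large enough that $F \subseteq \nu^+_t$, the tail sum is bounded above by $\sum_{\mu \in \nu^+_S \setminus F} \norm{\phi(\mu)}_p^p < M$, while the right-hand side of (\ref{ineq:max.norm.below}) is at least $\norm{\phi(\mu^*)}_p^p = M$; this yields (\ref{ineq:max.norm.below}) at infinitely many $t$.

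For the second assertion, set $M_t := \max\{\norm{\phi(\mu)}_p^p : \mu \in \nu^+_t\}$, which trivially satisfies $M_t \leq M$. If $M_t < M$, then any $\mu' \in \nu^+_S$ with $\norm{\phi(\mu')}_p^p = M$ must lie in $\nu^+_S \setminus \nu^+_t$, so the tail is at least $M > M_t$, contradicting (\ref{ineq:max.norm.below}); hence $M_t = M$. The only nontrivial step in this plan is recognizing the summativity-plus-disjointness identity that rewrites (\ref{ineq:max.norm.below}) as a tail condition; once that is in hand, I do not foresee any serious obstacle.
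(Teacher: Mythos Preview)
Your proposal is correct and follows essentially the same approach as the paper: both invoke Proposition~\ref{prop:max} to obtain a child $\mu^*$ realizing the maximum $M>0$, use the disjoint-support identity $\norm{\phi(\nu)}_p^p=\sum_{\mu\in\nu^+_S}\norm{\phi(\mu)}_p^p$ to reduce the left-hand side to a vanishing tail, and then argue by contradiction that $M_t<M$ would force $\mu^*\notin\nu^+_t$ and hence a tail of size at least $M$, violating~(\ref{ineq:max.norm.below}). Your version makes the tail reformulation more explicit than the paper does, but the substance is the same.
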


\begin{proof}
By Proposition \ref{prop:max}, there is a $\mu_0 \in \nu^+_S$ so that 
\[
\norm{\phi(\mu_0)}_p^p = \max\{\norm{\phi(\mu)}_p^p\ :\ \mu \in \nu^+_S\}.
\]
Since $\phi$ is a disintegration, $\norm{\phi(\mu_0)}_p^p \neq 0$ and 
\[
\lim_{t \rightarrow \infty} \norm{\phi(\nu)}^p_p - \sum_{\mu \in \nu^+_t} \norm{\phi(\mu)}^p_p = 0.
\]
Thus, there are infinitely many numbers $t$ so that (\ref{ineq:max.norm.below}) holds.

Now, suppose $t$ is a number so that (\ref{ineq:max.norm.below}) holds.  By way of contradiction, suppose $\norm{\phi(\mu_0)}^p_p > \max\{\mu \in \nu_t^+\ :\ \norm{\phi(\mu)}^p_p\}$.  Therefore,
\[
\norm{\phi(\nu)}^p_p < \sum_{\mu \in \nu_t^+} \norm{\phi(\mu)}^p_p + \norm{\phi(\mu_0)}^p_p.
\]
Since $\mu_0 \in \nu^+_S$ but $\mu_0 \not \in \nu^+_t$, it follows that $\mu_0$ is incomparable with every node in $\nu^+_t$.  So, since $\phi$ maps incomparable nodes to disjointly supported vectors,  
\[
\sum_{\mu \in \nu_t^+} \norm{\phi(\mu)}^p_p + \norm{\phi(\mu_0)}^p_p \leq \sum_{\mu \in \nu^+_S} \norm{\phi(\nu)}^p_p = \norm{\phi(\nu)}^p_p.
\]
This is a contradiction.  Therefore, $\norm{\phi(\mu_0)}^p_p = \max\{\norm{\phi(\mu)}^p_p\ :\ \mu \in \nu^+_S\}$.
\end{proof}

Since $\phi$ is computable with respect to $F$, $\nu \mapsto \norm{\phi(\nu)}_p$ is computable.  
So, there is a computable function $q : S \times \N \rightarrow \Q$ so that 
$|q(\nu,t) - \norm{\phi(\nu)}^p_p | < 2^{-(t+1)}$.  Set:
\begin{eqnarray*}
m(\nu,t) & = & \min\{q(\nu,t) - 2^{-(t+1)}, 0\}\\
M(\nu,t) & = & q(\nu,t) + 2^{-(t+1)}\\
\Sigma^-(X,t) & = & \sum_{\nu \in X} m(\nu,t)\\
\overline{m}(X, t) & = & \max\{m(\mu,t)\ :\ \mu \in X\}
\end{eqnarray*}
Thus, $m(\nu,t)$ is a lower bound on $\norm{\phi(\nu)}^p_p$, and $M(\nu, t)$ is an upper bound on $\norm{\phi(\nu)}^p_p$.  $\Sigma^-(\nu_t^+, t)$ is a lower bound on $\sum_{\mu \in \nu^+_S} \norm{\phi(\mu)}^p_p$, and $\overline{m}(\nu^+_t, t)$ is a lower bound on $\max\{\norm{\phi(\mu)}^p_p\ :\ \mu \in \nu^+_S\}$.  Also 
$\overline{m}(\nu_t^+, t) + 2^{-t}$ is an upper bound on $\max\{\norm{\phi(\mu)}^p_p\ :\ \mu \in \nu^+_S\}$.

\begin{lemma}\label{lm:max.norm.below.approx}
Suppose $\nu$ is a non-root and nonterminal node of $S$.  Then, there are infinitely many stages $t$ so that $M(\nu,t) -\Sigma^-(\nu^+_t,t) < \overline{m}(\nu_t^+, t)$.  At such a stage $t$, 
\[
0 \leq \max\{\norm{\phi(\mu)}^p_p\ :\ \mu \in \nu^+_S\} - \overline{m}(\nu^+_t, t) < 2^{-t}
\]
\end{lemma}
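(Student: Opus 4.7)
The plan is to reduce to Lemma \ref{lm:max.norm.below} by treating $M(\nu,t)$, $\Sigma^-(\nu^+_t,t)$, and $\overline{m}(\nu^+_t,t)$ as one-sided surrogates (upper, lower, and lower respectively) for $\norm{\phi(\nu)}^p_p$, $\sum_{\mu \in \nu^+_t}\norm{\phi(\mu)}^p_p$, and $\max_{\mu \in \nu^+_t}\norm{\phi(\mu)}^p_p$, and then to exploit the monotonicity of $\nu^+_t$ in $t$ to force the approximate inequality to hold cofinally.

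First I would dispatch the second assertion, the approximation bound at a good stage $t$. If $t$ satisfies $M(\nu,t) - \Sigma^-(\nu^+_t,t) < \overline{m}(\nu^+_t,t)$, then chaining the three surrogate bounds yields
\[
\norm{\phi(\nu)}^p_p - \sum_{\mu \in \nu^+_t}\norm{\phi(\mu)}^p_p \;\leq\; M(\nu,t) - \Sigma^-(\nu^+_t,t) \;<\; \overline{m}(\nu^+_t,t) \;\leq\; \max_{\mu \in \nu^+_t}\norm{\phi(\mu)}^p_p,
\]
which is exactly inequality (\ref{ineq:max.norm.below}). Lemma \ref{lm:max.norm.below} therefore gives $\max_{\mu \in \nu^+_t}\norm{\phi(\mu)}^p_p = \max_{\mu \in \nu^+_S}\norm{\phi(\mu)}^p_p$, and the pointwise tightness $\norm{\phi(\mu)}^p_p - m(\mu,t) < 2^{-t}$ (applied at the $\mu \in \nu^+_t$ realizing the maximum) produces the required sandwich $0 \leq \max_{\mu \in \nu^+_S}\norm{\phi(\mu)}^p_p - \overline{m}(\nu^+_t,t) < 2^{-t}$.

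For the existence of infinitely many such stages, set $\Delta := \max_{\mu \in \nu^+_S}\norm{\phi(\mu)}^p_p$; this exists and is strictly positive by Proposition \ref{prop:max}, since $\nu$ is nonterminal and $\phi$ is nonzero on $S - \{\emptyset\}$. Because $\phi$ is summative and sends incomparable nodes to disjointly supported vectors, $\norm{\phi(\nu)}^p_p = \sum_{\mu \in \nu^+_S}\norm{\phi(\mu)}^p_p$, and hence the exact gap
\[
\Delta_t := \max_{\mu \in \nu^+_t}\norm{\phi(\mu)}^p_p - \Bigl(\norm{\phi(\nu)}^p_p - \sum_{\mu \in \nu^+_t}\norm{\phi(\mu)}^p_p\Bigr)
\]
is monotone nondecreasing in $t$ with limit $\Delta > 0$. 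Routine error accounting gives
\[
M(\nu,t) - \Sigma^-(\nu^+_t,t) - \overline{m}(\nu^+_t,t) \;\leq\; -\Delta_t + (|\nu^+_t|+2)\cdot 2^{-t}.
\]

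The main obstacle is the cardinality factor $|\nu^+_t|$, which may grow with $t$ and thereby risk swamping the damping factor $2^{-t}$. This is circumvented by replacing $\{S_t\}_t$ if necessary with a thinner computable enumeration satisfying $|S_t|\leq t$, always arrangeable by adding at most one element per stage. Under such an enumeration $|\nu^+_t|\cdot 2^{-t}\to 0$, while $\Delta_t \to \Delta > 0$, so the displayed right-hand side is eventually negative and the desired approximate inequality holds for all sufficiently large $t$, in particular infinitely often.
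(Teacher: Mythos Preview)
Your treatment of the second assertion (the approximation bound at a good stage $t$) is essentially identical to the paper's: chain the surrogate inequalities to recover (\ref{ineq:max.norm.below}), invoke Lemma~\ref{lm:max.norm.below}, and then read off the $2^{-t}$ bound from the definition of $m$.

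For the existence of infinitely many good stages, your argument is correct but the paper avoids your thinning step. You bound
\[
M(\nu,t) - \Sigma^-(\nu^+_t,t) - \overline{m}(\nu^+_t,t) \leq -\Delta_t + (|\nu^+_t|+2)\,2^{-t}
\]
and then tame the $|\nu^+_t|$ factor by passing to an enumeration with $|S_t|\leq t$. This works, but it alters the ambient setup rather than proving the lemma for the enumeration already fixed. The paper instead exploits the nonnegativity of $m(\cdot,t)$ directly: given $N$, it uses Lemma~\ref{lm:max.norm.below} to find $t_0>N$ at which (\ref{ineq:max.norm.below}) holds, \emph{freezes} the finite set $U=\nu^+_{t_0}$, and observes that for all later $t$ one has $\Sigma^-(\nu^+_t,t)\geq\Sigma^-(U,t)$ since $U\subseteq\nu^+_t$ and each $m(\mu,t)\geq 0$. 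Now the number of summands is fixed at $|U|$, so $M(\nu,t)-\Sigma^-(U,t)\to\norm{\phi(\nu)}^p_p-\sum_{\mu\in U}\norm{\phi(\mu)}^p_p$ and $\overline{m}(\nu^+_t,t)\to\max_{\mu\in\nu^+_S}\norm{\phi(\mu)}^p_p$, giving some $t_1>t_0$ with $M(\nu,t_1)-\Sigma^-(U,t_1)<\overline{m}(\nu^+_{t_1},t_1)$; monotonicity of $\Sigma^-$ in its set argument then yields the desired inequality with $\nu^+_{t_1}$ in place of $U$. This buys a proof that is enumeration-agnostic and sidesteps any growth-rate bookkeeping; your approach buys a more explicit quantitative error bound at the cost of constraining the enumeration.
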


\begin{proof}
Let $N \in \N$.
By Lemma \ref{lm:max.norm.below}, there is a stage $t_0 > N$ so that 
\[
\norm{\phi(\nu)}^p_p - \sum_{\mu \in \nu_{t_0}^+} \norm{\phi(\mu)}^p_p < \max\{\norm{\phi(\mu)}^p_p\ :\ \mu \in \nu^+_{t_0}\}.
\]
Set $U = \nu_{t_0}^+$.  Then, 
\[
\lim_{t \rightarrow \infty} M(\nu,t) - \Sigma^-(U,t) = \norm{\phi(\mu)}^p_p - \sum_{\mu \in U} \norm{\phi(\mu)}^p_p
\]
and, 
\[
\lim_{t \rightarrow \infty} \overline{m}(\nu^+_t, t) = \max\{\mu \in \nu^+_S\ :\ \norm{\phi(\mu)}^p_p\}.
\]
So, there is a number $t_1 > t_0$ so that 
\[
M(\nu,t_1) - \Sigma^-(U, t_1) < \overline{m}(\nu_{t_1}^+, t_1).
\]
 By definition, $m(\nu, t) \geq 0$.  Since $t_1 > t_0$, $U \subseteq \nu^+_{t_1}$.  Thus, $M(\nu,t_1) -\Sigma^-(\nu^+_{t_1},t_1) < \overline{m}(\nu_{t_1}^+, t_1)$.

Now, suppose $M(\nu,t) -\Sigma^-(\nu^+_t,t) < \overline{m}(\nu_t^+, t)$.
By definition of $M$, $\Sigma^-$, $m$, 
\[
\norm{\phi(\nu)}^p_p - \sum_{\mu \in \nu^+_t} \norm{\phi(\mu)}^p_p \leq M(\nu,t) -\Sigma^-(\nu^+_t,t), 
\]
and 
\[
\overline{m}(\nu_t^+, t) \leq \max_{\mu \in \nu^+_t} \norm{\phi(\mu)}^p_p
\]
So, by Lemma \ref{lm:max.norm.below}, 
\[
\max\{\norm{\phi(\mu)}^p_p\ :\ \mu \in \nu^+_S\} = \max\{ \norm{\phi(\mu)}^p_p\ :\ \mu \in \nu^+_t\}.
\]
Furthermore, 
\[
\max\{\norm{\phi(\nu)}^p_p\ :\ \mu \in \nu^+_t\} < \overline{m}(\nu,t) + 2^{-t}.
\]
This proves the lemma.
\end{proof}

Now, suppose $\mu$ is a nonterminal node of $S$. 
Search for $t>  |\mu| $ so that 
\[
M(\mu,t) -\Sigma^-(\mu^+_t,t) < \overline{m}(\mu_t^+, t).
\]
Then, find $\tau \in \mu_t^+$ so that $m(\tau, t) = \overline{m}(\mu_t^+, t)$.  
Therefore, 
\begin{eqnarray*}
\max\{\norm{\phi(\mu')}^p_p\ :\ \mu' \in \mu^+\} & = & \max\{\norm{\phi(\mu')}^p_p\ :\ \mu' \in \mu_t^+\}\\
& < & m(\tau,t) + 2^{-t}\\
& < & m(\tau,t) + 2^{-|\mu|}\\
& \leq & \norm{\phi(\tau)}^p_p + 2^{-|\mu|}
\end{eqnarray*}
Thus, $\tau$ is an almost norm-maximizing child of $\mu$ in $S$. 

\subsection{Proof of Theorem \ref{thm:comp.partial.disintegration.ext}}

Suppose $F = \{f_0, f_1, \ldots\}$ is an effective generating set for $\ell^p$.  
Let $F^S$ denote the set of all maps from $S$ into $F$.  It follows that 
$F^S$ is an effective generating set for $\ell^p_S$; i.e. $(\ell^p_S, F^S)$ is a computable 
Banach space.  
Furthermore, $\mathcal{L}_{\Q(i)}(F^S)$ coincides with the set of all maps from 
$S$ into $\mathcal{L}_{\Q(i)}(F)$.

We now introduce some notation.  Suppose $S'$ is a finite subtree of $\omega^{< \omega}$ that includes $S$.   
Let:
\begin{eqnarray*}
\mathcal{M}_{S'} & = & \{\psi \in \ell^p_{S'- \{\emptyset\}}\ :\ \mbox{$\psi$ is injective}\}\\
\Delta_{S', N} & = & \{\psi \in \ell^p_{S' - \{\emptyset\}}\ :\ \forall 0 \leq j < N\ d(f_j, \langle \ran(\psi) \rangle) < 2^{-N}\}
\end{eqnarray*}
Let $\mathcal{S}_{S', N}$ denote the set of all $\psi \in \ell^p_{S' - \{\emptyset\}}$ so that 
\[
\norm{\psi(\nu) - \sum_{\nu' \in \nu^+_{S'}} \psi(\nu')}_p < 2^{-N}
\]
whenever $\nu$ is a non-root and nonterminal node of $S'$.  

\begin{lemma}\label{lm:ce.closed.open}
\begin{enumerate}
	\item If each node of $S'$ extends a node of $S$, then $\mathcal{H}_{\phi, S'}$ is c.e. closed uniformly in $\phi, S'$.\label{lm:ce.closed.open::itm:closed}
	
	\item The sets $\mathcal{M}_S$, $\Delta_{S',N}$, $\mathcal{S}_{S',N}$ are c.e. open uniformly in $S, S', N$.  \label{lm:ce.closed.open::itm:open}
\end{enumerate}
\end{lemma}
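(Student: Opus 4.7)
The plan is to effectively enumerate from strict inequalities on computable quantities in each case.

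For (\ref{lm:ce.closed.open::itm:closed}), the strategy is to use Theorem \ref{thm:distance.to.ext} to manufacture a computable upper bound on $d(\cdot, \mathcal{H}_{\phi, S'})$. Define
\[
U(\psi) = \bigl( \norm{\phi - \psi|_{S - \{\emptyset\}}}_{S - \{\emptyset\}}^p + 2^p \sigma(\phi \cup \psi|_{S' - S}) \bigr)^{1/p}.
\]
Then $U$ is a computable (hence continuous) function of $\psi$ uniformly in $\phi$ and $S'$; Theorem \ref{thm:distance.to.ext} gives $U(\psi) \geq d(\psi, \mathcal{H}_{\phi, S'})$; and $U$ vanishes on $\mathcal{H}_{\phi, S'}$ since any $\psi$ there agrees with $\phi$ on $S - \{\emptyset\}$ (killing the first term) and is a strong reverse-order homomorphism (killing $\sigma$). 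To semi-decide whether a rational ball $B(f; r)$ meets $\mathcal{H}_{\phi, S'}$---equivalently, whether $d(f, \mathcal{H}_{\phi, S'}) < r$---search for a rational $g \in \ell^p_{S' - \{\emptyset\}}$ (i.e.\ with coordinates in $\mathcal{L}_{\Q(i)}(F)$) satisfying $\norm{f - g}_{S' - \{\emptyset\}} + U(g) < r$. Soundness is the triangle inequality combined with $d(g, \mathcal{H}_{\phi, S'}) \leq U(g)$; completeness follows by choosing $g$ rational and close to a witness $\psi' \in B(f; r) \cap \mathcal{H}_{\phi, S'}$, using continuity of $U$ and $U(\psi') = 0$.

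For (\ref{lm:ce.closed.open::itm:open}), each set is cut out by finitely many strict inequalities on computable quantities. For $\mathcal{M}_S$, enumerate $B(f; r)$ with $\norm{f(\nu) - f(\nu')}_p > 2r$ for all distinct $\nu, \nu' \in S - \{\emptyset\}$; the triangle inequality then gives $\norm{g(\nu) - g(\nu')}_p > 0$ for every $g \in B(f; r)$, and any $\psi \in \mathcal{M}_S$ lies in such a ball by choosing $r$ below one quarter of $\min_{\nu \neq \nu'}\norm{\psi(\nu) - \psi(\nu')}_p$ and $f$ rational within $r$ of $\psi$. For $\Delta_{S', N}$, enumerate $B(f; r)$ for which, for each $j < N$, there exist finitely-supported rational coefficients $\alpha_j \colon S' - \{\emptyset\} \to \Q(i)$ with $\norm{f_j - \sum_\nu \alpha_j(\nu) f(\nu)}_p + r \sum_\nu |\alpha_j(\nu)| < 2^{-N}$; the triangle inequality bounds $d(f_j, \langle \ran(g) \rangle)$ below $2^{-N}$ uniformly over $g \in B(f; r)$, and coverage follows from density of $\mathcal{L}_{\Q(i)}(F)$. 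For $\mathcal{S}_{S', N}$, enumerate $B(f; r)$ with $\norm{f(\nu) - \sum_{\nu' \in \nu^+_{S'}} f(\nu')}_p + (1 + \#\nu^+_{S'})\, r < 2^{-N}$ for every non-root nonterminal $\nu$ of $S'$; the triangle inequality again propagates the bound to all of $B(f; r)$, and any $\psi \in \mathcal{S}_{S', N}$ sits in such a ball by continuity.

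The main substantive ingredient is the distance estimate of Theorem \ref{thm:distance.to.ext}, which drives (\ref{lm:ce.closed.open::itm:closed}); once available, the equivalence between $d(f, \mathcal{H}_{\phi, S'}) < r$ and the existence of a rational $g$ with $\norm{f - g} + U(g) < r$ is automatic. Part (\ref{lm:ce.closed.open::itm:open}) reduces to routine triangle-inequality bookkeeping with rational approximations.
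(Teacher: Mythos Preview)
Your proof is correct and takes essentially the same approach as the paper: for (\ref{lm:ce.closed.open::itm:closed}) the paper defines the same function (denoted $\mathcal{E}^{1/p}$) and then applies the folklore Proposition~\ref{prop:bounding.principle} directly rather than unwinding it into an explicit search procedure, and for (\ref{lm:ce.closed.open::itm:open}) it writes each set as the preimage of an open interval under a computable function (e.g.\ $\mathcal{M}_{S'} = G_1^{-1}[(0,\infty)]$ with $G_1(\psi)=\min_{\nu\neq\nu'}\norm{\psi(\nu)-\psi(\nu')}_p$, and $\Delta_{S',N}=\bigcup_\beta \Delta_{S',N,\beta}$ just as you do) and invokes the standard fact that such preimages are c.e.\ open, rather than giving explicit triangle-inequality buffers. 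The substantive content is identical.
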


\begin{proof}
(\ref{lm:ce.closed.open::itm:closed}): When $\psi \in \ell^p_{S' - \{\emptyset\}}$, set 
\[
\mathcal{E}(\psi) =  \norm{\psi'|_{S - \{\emptyset\}} - \phi|_{S - \{\emptyset\}} }_p^p + 2^p\sigma(\phi \cup \psi'|_{S' - S}).
\] 
Therefore, $\psi \in \mathcal{H}_{\phi, S'}$ if and only if $\mathcal{E}(\psi) = 0$.
By Theorem \ref{thm:distance.to.ext}, $d(\psi, \mathcal{H}_{\phi, S'}) \leq \mathcal{E}(\psi)^{1/p}$.  
Since $\phi, p$ are computable, $\mathcal{E}$ is computable.  Thus, by Proposition \ref{prop:bounding.principle}, 
$\mathcal{H}_{\phi, S'}$ is c.e. closed.\\

(\ref{lm:ce.closed.open::itm:open}): When $\psi \in \ell^p_S$, let 
\[
G_1(\psi) = \min\{\norm{\psi(\nu) - \psi(\nu')}_p\ :\ \nu, \nu' \in S\ \wedge\ \nu \neq \nu'\}.
\]
Therefore, $G_1$ is computable with respect to $F^{S'}$.  Since, 
$\mathcal{M}_{S'} = G_1^{-1}[(0, \infty)]$, $\mathcal{M}_{S'}$ is c.e. open.

When $\psi \in \ell^p_{S'}$, let $G_2(\psi)$ denote the minimum of 
\[
\norm{\psi(\nu) - \sum_{\nu' \in \nu^+_{S'}} \psi(\nu')}_p 
\]
as $\nu$ ranges over the nonterminal non-root nodes of $S'$.  It follows that 
$G_2$ is computable with respect to $F^{S'}$ and so 
$\mathcal{S}_{S', N} = G_2^{-1}[(-\infty, 2^{-N})]$ is c.e. open.  

Note that $\psi \in \Delta_{S', N}$ if and only if there exists 
$\beta : \{0, \ldots, N-1\}\ \times\ S' - \{\emptyset\} \rightarrow \Q(i)$ so that 
\[
\norm{f_j - \sum_{\nu \in S' - \{\emptyset\}} \beta(j,\nu) \psi(\nu)}_p < 2^{-N}.
\]
whenever $0 \leq j < N$.  When $\beta : \{0, \ldots, N-1\} \times S' - \{\emptyset\} \rightarrow \Q(i)$, 
set 
\[
\Delta_{S', N, \beta} = \left\{\psi \in \ell^p_{S' - \{\emptyset\}}\ :\ \forall 0 \leq j < N\ \norm{f_j - \sum_{\nu \in S' - \{\emptyset\}} \beta(j, \nu) \psi(\nu)}_p < 2^{-N} \right\}.
\]
Thus, $\Delta_{S', N} = \bigcup_\beta \Delta_{S', N, \beta}$.  
Set 
\[
G_{3, \beta}(\psi) = \max\left\{\norm{f_j - \sum_{\nu \in S' - \{\emptyset\}} \beta(j, \nu) \psi(\nu)}_p\ :\ 0 \leq j < N\right\}.
\]
Therefore, $\Delta_{S', N, \beta} = G_{3, \beta}^{-1}[(-\infty, 2^{-N})]$.  Hence, 
$\Delta_{S', N, \beta}$ is c.e. open uniformly in $S', N, \beta$.  Thus, 
$\Delta_{S', N}$ is c.e. open uniformly in $S', N$.  
\end{proof}

We can now prove Theorems \ref{thm:comp.partial.disintegration.ext} and \ref{thm:comp.disintegration}.  

\begin{proof}[Proof of Theorem \ref{thm:comp.partial.disintegration.ext}:]  For the moment, fix a finite tree $S' \supseteq S$.  When $S' \supseteq S$, let $\pi_{S'}$ denote the canonical projection of $\ell^p_{S' - \{\emptyset\}}$ onto 
$\ell^p_{S - \{\emptyset\}}$, and let 
\[
C_{S'} = \mathcal{H}_{\emptyset, S'} \cap \mathcal{M}_{S'} \cap \Delta_{S,N} \cap \mathcal{S}_{S',N} \cap \pi^{-1}_{S'}[B(\phi; 2^{-k})].
\]
By Theorem \ref{thm:partial.disintegration.ext}, there \emph{is} an $S'$ so that 
$C_{S'} \neq \emptyset$.  Such an $S'$ can be found by an effective search procedure.  Since $\mathcal{H}_{\emptyset, S'}$ is c.e. closed, it follows that 
$C_{S'}$ contains a vector $\psi$ that is computable with respect to $F^{S' - \{\emptyset\}}$ and an index of $\psi$ can be computed from $k$, $N$, and an index of $\phi$.  
\end{proof}

\begin{proof}[Proof of Theorem \ref{thm:comp.disintegration}:]
Let $F = \{f_0, f_1, \ldots\}$.  

Set $S_0 = \{(0)\}$.  Compute $j_0$ so that $f_{j_0} \neq \mathbf{0}$.  Set $\hat{\phi}_0((0)) = f_{j_0}$.  By Lemma \ref{lm:ce.closed.open}, we can compute a $k_0 \in \N$ so that each map in $B(\hat{\phi}_0; 2^{-k_0})$ is injective and never zero.

It now follows from Theorem \ref{thm:comp.partial.disintegration.ext} and Lemma \ref{lm:ce.closed.open} that there is a sequence $\{\hat{\phi}_n\}_n$ of computable partial disintegrations of $\ell^p$ and a sequence $\{k_n\}_n$ of nonnegative integers that have the  following properties. 
\begin{enumerate}
	\item An index of $\hat{\phi}_n$ and a canonical index of $\dom(\hat{\phi}_n)$ can be computed from $n$. 
	
	\item If $S_n =\dom(\phi_n)$, then $S_{n+1} \supset S_n$ and $\norm{ \hat{\phi}_{n+1}|_{S_n} - \hat{\phi}_n}_{S_n} < 2^{-(k_n+1)}$. 
	
	\item Each map in $\overline{B(\hat{\phi}_n; 2^{-k_n})}$ is injective, never zero, and has a success index that is at least $n$.
\end{enumerate}

Let $\phi_{n,t} = \hat{\phi}_{t + n} | S_n$ for all $n,t$.  It follows that $\{\phi_{n,t}\}_t$ is computable
with respect to $F^{S_n - \{\emptyset\}}$; furthermore, an index of this sequence can be computed from $n$.  
It also follows that $\norm{\phi_{n,t+1} - \phi_{n,t}}_{S_n} < 2^{-(k_{n+t} + 1)}$.  Thus, $\phi_n := \lim_t \phi_{n,t}$ is computable with respect to $F^{S_n - \{\emptyset\}}$; furthermore, an index of $\phi_n$ can be computed from $n$.  Also, $\norm{\hat{\phi}_n - \phi_n}_{S_n} \leq 2^{-k_n}$.  Thus, $\phi_n$ is a partial disintegration whose success index is at least $n$.  By definition, $\phi_{n,t+1} \subseteq \phi_{n+1,t}$.  Thus, $\phi_n \subseteq \phi_{n+1}$.  Let $\phi = \bigcup_n \phi_n$.

Let $S = \dom(\phi)$.  For each $\nu \in S$, let 
\[
\psi(\nu) = 2^{-\nu(0)} \norm{\phi((\nu(0))}_p^{-1} \phi(\nu).
\]
Then, let 
\[
\psi(\emptyset) = \sum_{\nu \in \N^1 \cap S} \psi(\nu).
\]
Since $S$ is computable, it follows that $\psi(\emptyset)$ is a computable with respect to $F$.  It then follows that $\psi$ is a disintegration and is computable with respect to $F$.
\end{proof}

\subsection{Proof of Theorem \ref{thm:main.2}}\label{subsec:proof.thm.main.2}

Suppose $p$ is a computable real so that $p \geq 1$ and $p \neq 2$, and assume $C$ is a c.e. set.  
Again, we can reduce to the consideration of surjective linear endomorphisms of $\ell^p$.  
Specifically, it suffices to show there is an effective generating set $F$ for $\ell^p$ so that,  
with respect to $(E, F)$, $C$ computes a surjective linear endomorphism of $\ell^p$ and so that 
any oracle that with respect to $(E,F)$ computes a surjective linear endomorphism of $\ell^p$ also computes $C$.
We demonstrate this as follows.

We can assume $C$ is incomputable.  Without loss of generality, we also assume $0 \not \in C$.  Let $\{c_n\}_{n \in \N}$ be a one-to-one effective enumeration of $C$.  Set
\[
\gamma = \sum_{k \in C} 2^{-k}.
\]
Thus, $0 < \gamma < 1$, and $\gamma$ is an incomputable real.  Set:
\begin{eqnarray*}
f_0 & = & (1 - \gamma)^{1/p} e_0 + \sum_{n = 0}^\infty 2^{- c_n / p} e_{n + 1}\\
f_{n + 1} & = & e_{n + 1}\\
F & = & \{f_0, f_1, f_2, \ldots \}
\end{eqnarray*}
Since $1 - \gamma > 0$, we can use the standard branch of $\sqrt[p]{\ }$.  
We divide the rest of the proof into a sequence of lemmas.

\begin{lemma}\label{lm:EGS}
$F$ is an effective generating set.
\end{lemma}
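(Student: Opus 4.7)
The plan is to verify the two clauses of the definition: that $F$ generates $\ell^p$, and that $\norm{\,\cdot\,}_p$ is computable on $\mathcal{L}_{\Q(i)}(F)$. Generation is almost immediate. Since $f_j = e_j$ for every $j \geq 1$ we have $\{e_1, e_2, \ldots\} \subseteq F$; and for each $N$,
\[
g_N := f_0 - \sum_{n=0}^N 2^{-c_n/p} f_{n+1} = (1-\gamma)^{1/p} e_0 + \sum_{n > N} 2^{-c_n/p} e_{n+1},
\]
whose second summand has $p$-norm $\bigl(\sum_{n > N} 2^{-c_n}\bigr)^{1/p} \to 0$ because $\sum_n 2^{-c_n} = \gamma < \infty$. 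Hence $(1-\gamma)^{1/p} e_0 \in \langle F \rangle$, and since $0 < \gamma < 1$ the scalar is nonzero, giving $e_0 \in \langle F \rangle$ and therefore $\langle F \rangle = \ell^p$.

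For the norm, I would expand $f = \alpha_0 f_0 + \sum_{j=1}^M \alpha_j f_j$ in the standard basis: the coefficient of $e_0$ is $\alpha_0(1-\gamma)^{1/p}$, that of $e_{n+1}$ for $0 \leq n < M$ is $\alpha_0 \cdot 2^{-c_n/p} + \alpha_{n+1}$, and that of $e_{n+1}$ for $n \geq M$ is $\alpha_0 \cdot 2^{-c_n/p}$. Summing the $p$-th powers gives
\[
\norm{f}_p^p = |\alpha_0|^p(1-\gamma) + \sum_{n=0}^{M-1} |\alpha_0 \cdot 2^{-c_n/p} + \alpha_{n+1}|^p + |\alpha_0|^p \sum_{n \geq M} 2^{-c_n}.
\]
The crucial cancellation is $|\alpha_0|^p(1-\gamma) + |\alpha_0|^p \sum_{n \geq M} 2^{-c_n} = |\alpha_0|^p \bigl(1 - \sum_{n=0}^{M-1} 2^{-c_n}\bigr)$, which collapses the norm to the \emph{finite} expression
\[
\norm{f}_p^p = |\alpha_0|^p \Bigl( 1 - \sum_{n=0}^{M-1} 2^{-c_n}\Bigr) + \sum_{n=0}^{M-1} |\alpha_0 \cdot 2^{-c_n/p} + \alpha_{n+1}|^p.
\]
This is uniformly computable from $M$, the coefficients $\alpha_0, \ldots, \alpha_M \in \Q(i)$, the computable real $p$, and the values $c_0, \ldots, c_{M-1}$ supplied by the fixed one-to-one computable enumeration of $C$; a computable $p$-th root then yields $\norm{f}_p$ to any desired precision.

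The only real obstacle is that $\gamma$ is incomputable (since $C$ is only c.e. and in general undecidable), so a naive expansion of $\norm{f}_p^p$ seems to require either the value $\gamma$ itself or membership decisions for $C$ at arbitrarily high positions. The main content of the lemma is precisely that all such data cancel in any $\Q(i)$-linear combination of elements of $F$, leaving a finite-sum formula that a Turing machine can evaluate.
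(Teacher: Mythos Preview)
Your proof is correct and essentially identical to the paper's. The paper packages the same cancellation slightly differently---observing that $\norm{f_0}_p = 1$ and writing $\norm{\sum_j \alpha_j f_j}_p^p = |\alpha_0|^p + \sum_{j=1}^M \mathcal{E}_j$ with $\mathcal{E}_j = |\alpha_0 2^{-c_{j-1}/p} + \alpha_j|^p - |\alpha_0|^p 2^{-c_{j-1}}$---but expanding this recovers exactly your finite-sum formula.
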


\begin{proof}
Since 
\[
(1 - \gamma)^{1/p} e_0 = f_0 - \sum_{n =1}^\infty 2^{-c_{n-1} / p} f_n
\]
the closed linear span of $F$ includes $E$.  Thus, $F$ is a generating set for $\ell^p$.  Note that $\norm{f_0} = 1$.

Suppose $\alpha_0, \ldots, \alpha_M$ are rational points.  When $1 \leq j \leq M$, set
\[
\mathcal{E}_j = |\alpha_0 2^{-c_{j-1} / p} + \alpha_j |^p - |\alpha_0|^p 2^{-c_{j-1}}.
\]
It follows that 
\begin{eqnarray*}
\norm{\alpha_0 f_0 + \ldots +\alpha_M f_m}^p & = & |\alpha_0|^p \norm{f_0}^p + \mathcal{E}_1 + \ldots + \mathcal{E}_M\\
& = & |\alpha_0|^p + \mathcal{E}_1 + \ldots + \mathcal{E}_m.
\end{eqnarray*}
Since $\mathcal{E}_1$, $\ldots$, $\mathcal{E}_M$ can be computed from $\alpha_0, \ldots, \alpha_M$, $\norm{\alpha_0 f_0 + \ldots + \alpha_M f_M}$ can be computed from $\alpha_0, \ldots, \alpha_M$.  Thus, $F$ is an effective generating set.
\end{proof}

\begin{lemma}\label{lm:X.computes.C.0}
Every oracle that with respect to $F$ computes a
unimodular scalar multiple of $e_0$ must also compute $C$.
\end{lemma}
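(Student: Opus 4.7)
The plan is to mine any $X$-computed $\mathcal{L}_{\Q(i)}(F)$-approximation to $\lambda e_0$ (with $|\lambda| = 1$) for information about $\gamma$, and then to decode the characteristic function of $C$ from arbitrarily precise approximations to $\gamma$.

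The key observation is that $f_{n+1} = e_{n+1}$ for all $n \geq 0$, so none of $f_1, f_2, \ldots$ contributes to the $e_0$-coordinate of any $\Q(i)$-linear combination. Consequently, if $g = \alpha_0 f_0 + \sum_{n=1}^M \alpha_n f_n$, then the $e_0$-coordinate of $g$ equals $\alpha_0(1-\gamma)^{1/p}$ exactly. Given an approximation with $\norm{g - \lambda e_0}_p < 2^{-k}$, projecting onto the $e_0$-coordinate and using $|\lambda|=1$ together with the reverse triangle inequality yields $\bigl||\alpha_0|(1-\gamma)^{1/p} - 1\bigr| < 2^{-k}$. A standard mean-value estimate for the map $x \mapsto x^p$ on a bounded interval (with constant depending computably on $p$) upgrades this to $\bigl||\alpha_0|^p(1-\gamma) - 1\bigr| \leq c_p \cdot 2^{-k}$, so $1 - |\alpha_0|^{-p}$ approximates $\gamma$ with an effective error bound in $k$. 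Varying $k$, the oracle $X$ thus computes $\gamma$ as a real to arbitrary precision.

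To decide membership in $C$, I would combine these $X$-computed bounds with the c.e.\ enumeration $\{c_n\}$ of $C$. Set $\gamma_s = \sum_{j < s} 2^{-c_j}$, so $\gamma_s \nearrow \gamma$, and let $\overline{\gamma}_t$ denote an $X$-computed upper bound on $\gamma$ with $\overline{\gamma}_t \to \gamma$. For a given $k$, dovetail through pairs $(s,t)$: output ``$k \in C$'' if $k$ appears in $\{c_0,\ldots, c_{s-1}\}$, and output ``$k \notin C$'' if we find $k \notin \{c_0,\ldots, c_{s-1}\}$ together with $\overline{\gamma}_t - \gamma_s < 2^{-k}$. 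This procedure terminates: if $k \in C$ then $k$ is eventually enumerated, and otherwise both $\gamma - \gamma_s \to 0$ and $\overline{\gamma}_t - \gamma \to 0$ force the gap below $2^{-k}$. Correctness follows because $k \in C$ with $k \notin \{c_0, \ldots, c_{s-1}\}$ forces $\gamma - \gamma_s \geq 2^{-k}$, hence $\overline{\gamma}_t - \gamma_s \geq 2^{-k}$ for every $t$. Thus $X$ computes $C$.

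The main obstacle is just the bookkeeping in the first step: transferring the $\ell^p$-norm error bound to an explicit computable error bound on $\gamma$ via the map $x \mapsto x^p$. Once $X$ computes $\gamma$, the dovetailed decision procedure is routine.
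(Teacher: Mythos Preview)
Your proof is correct and follows essentially the same approach as the paper: both read off the $e_0$-coordinate of an $F$-approximation to $\lambda e_0$ to obtain $\bigl||\alpha_0|(1-\gamma)^{1/p} - 1\bigr| < 2^{-k}$, and then conclude that $X$ computes a real Turing-equivalent to $\gamma$. The paper simply divides through by $(1-\gamma)^{1/p}$ (using a fixed rational bound $q_0 \geq (1-\gamma)^{-1/p}$) to show that $|\alpha_0|$ approximates $(1-\gamma)^{-1/p}$ and leaves the passage from this real to $C$ implicit, whereas you take the slightly longer route through $x \mapsto x^p$ to approximate $\gamma$ itself and then spell out the dovetailed decision procedure for $C$.
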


\begin{proof}
Suppose that with respect to $F$, $X$ computes a vector of the form $\lambda e_0$ where $|\lambda| = 1$.  It suffices to show that $X$ computes $(1 - \gamma)^{-1/p}$.  

Fix a rational number $q_0$ so that $(1 - \gamma)^{-1/p} \leq q_0$.  Let $k \in \N$ be given as input.  Compute $k'$ so that $2^{-k'} \leq q_0 2^{-k}$.  Since $X$ computes $\lambda e_0$ with respect to $F$, we can use oracle $X$ to compute rational points $\alpha_0, \ldots, \alpha_M$ so that 
\begin{equation}
\norm{\lambda e_0 - \sum_{j = 0}^M \alpha_j f_j} < 2^{-k'}.\label{inq:1}
\end{equation}
We claim that $|(1 - \gamma)^{-1/p} - |\alpha_0| | < 2^{-k}$.  For, it follows from (\ref{inq:1}) that $|\lambda - \alpha_0 (1 - \gamma)^{1/p}| < 2^{-k'}$.  Thus, $|1 - |\alpha_0| (1 - \gamma)^{1/p}| < 2^{-k'}$.  Hence, 
\[
|(1 - \gamma)^{-1/p} - |\alpha_0|| < 2^{-k'}(1 - \gamma)^{-1/p} \leq 2^{-k'}q_0  \leq 2^{-k}.
\]
Since $X$ computes $\alpha_0$ from $k$, $X$ computes $(1 - \gamma)^{-1/p}$.
\end{proof}

\begin{lemma}\label{lm:X.computes.C.1}
If $X$ computes a surjective isometric endomorphism of $\ell^p$ with respect to $(E,F)$, then $X$ must also compute $C$. 
\end{lemma}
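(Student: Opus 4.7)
The plan is to derive from the oracle $X$ a $\Pi^0_1$-in-$X$ characterization of membership in $C$ and combine it with the oracle-free $\Sigma^0_1$ enumeration of $C$ to conclude $C \leq_T X$.

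By Theorem \ref{thm:classification}, $T$ has the form $T(e_n) = \lambda_n e_{\phi(n)}$ for some unimodular scalars $\lambda_n$ and some permutation $\phi$ of $\N$. Setting $m := \phi^{-1}(0)$ and using $T^{-1}(e_k) = \overline{\lambda_{\phi^{-1}(k)}}\, e_{\phi^{-1}(k)}$, the definition of $f_0$ yields
\[
T^{-1}(f_0) = (1-\gamma)^{1/p}\,\overline{\lambda_m}\, e_m + \sum_{n=0}^{\infty} 2^{-c_n/p}\, \overline{\lambda_{\phi^{-1}(n+1)}}\, e_{\phi^{-1}(n+1)}.
\]
Since the inverse of a computable surjective isometry is computable, $X$ computes $T^{-1}$ with respect to $(F, E)$, hence $X$ computes the vector $T^{-1}(f_0)\in \ell^p$ in the $E$-basis; in particular, the coordinate $|T^{-1}(f_0)(j)|^p$ is $X$-computable as a real, uniformly in $j$. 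Note that the multiset of these coordinates is exactly $\{1-\gamma\}\cup\{2^{-c}:c\in C\}$.

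The key equivalence I intend to prove is: for every $k\geq 1$,
\[
k \in C \;\Longleftrightarrow\; (\exists j \leq N_k)\ |T^{-1}(f_0)(j)|^p = 2^{-k},
\]
where $N_k$ is chosen $X$-computably from $k$ so that $\sum_{j>N_k}|T^{-1}(f_0)(j)|^p < 2^{-k-1}$; such an $N_k$ exists and can be found by searching, because $\|T^{-1}(f_0)\|_p^p = 1$ and the partial sums $\sum_{j\leq N}|T^{-1}(f_0)(j)|^p$ converge computably to $1$ as $N\to\infty$. The forward implication takes $j := \phi^{-1}(n+1)$ for the $n$ with $c_n = k$: this $j$ satisfies $|T^{-1}(f_0)(j)|^p = 2^{-k}$, and since $2^{-k}\geq 2^{-k-1}$ the tail bound forces $j\leq N_k$. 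For the converse, if the witness $j$ were $m$, then $1-\gamma = 2^{-k}$ would force $C = \{c\geq 1: c\neq k\}$ and hence $C$ would be computable, contradicting the standing assumption that $C$ is incomputable; so $j = \phi^{-1}(n+1)$ for some $n$ with $c_n = k$, witnessing $k\in C$.

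Finally, each statement "$|T^{-1}(f_0)(j)|^p = 2^{-k}$" is $\Pi^0_1$ in $X$ (equality of an $X$-computable real with a computable one), and a bounded existential of a $\Pi^0_1$-in-$X$ predicate remains $\Pi^0_1$ in $X$, so the right-hand side of the displayed equivalence is $\Pi^0_1$ in $X$ uniformly in $k$. Combining with the fact that $k\in C$ is $\Sigma^0_1$ (via the oracle-free enumeration $c_0,c_1,\ldots$ of $C$) places $k\in C$ in $\Delta^0_1$ relative to $X$; that is, $X$ computes $C$. I expect the delicate step to be the backward direction of the equivalence, where the anomalous value $1-\gamma$ at position $m$ must be prevented from spuriously equaling $2^{-k}$ --- which is precisely where the incomputability of $C$ enters the argument.
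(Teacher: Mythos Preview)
Your argument is correct, but it takes a different route from the paper. The paper's proof is a two-line application of the preceding Lemma~\ref{lm:X.computes.C.0}: by Theorem~\ref{thm:classification} there is some $j_0$ with $T(e_{j_0}) = \lambda e_0$, and since $T$ is $X$-computable with respect to $(E,F)$, the vector $\lambda e_0$ is $X$-computable with respect to $F$; Lemma~\ref{lm:X.computes.C.0} then immediately gives $C \leq_T X$ by extracting $(1-\gamma)^{-1/p}$ from the $f_0$-coefficient of an $F$-approximation to $\lambda e_0$.

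You instead pass to $T^{-1}$, pull $f_0$ back to the $E$-side, and read $C$ off the coordinate multiset $\{1-\gamma\}\cup\{2^{-c}:c\in C\}$ via a $\Sigma^0_1$/$\Pi^0_1$-in-$X$ sandwich. This is a perfectly valid self-contained argument that bypasses Lemma~\ref{lm:X.computes.C.0} entirely. The cost is the extra case analysis ruling out the spurious witness $1-\gamma = 2^{-k}$ (where you correctly use incomputability of $C$; note that infinitude of $C$ is what pins down the unique infinite binary expansion of $1-2^{-k}$). The paper's approach buys brevity and modularity by isolating the ``$\lambda e_0$ computable in $F$ implies $C$ computable'' step as a separate lemma; your approach buys a more explicit description of how $C$ sits inside the data, at the price of a longer argument.
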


\begin{proof}
Let $T$ be a surjective endomorphism of $\ell^p$, and suppose $X$ computes $T$ with respect to $(E,F)$.   
By Theorem \ref{thm:classification}, there exists $j_0, \lambda$ so that $T(e_{j_0}) = \lambda e_0$ and $|\lambda| = 1$.  
So, by Lemma \ref{lm:X.computes.C.0}, $X$ computes $C$. 
\end{proof}

\begin{lemma}\label{lm:C.computes.e_0}
With respect to $F$, $C$ computes $e_0$.
\end{lemma}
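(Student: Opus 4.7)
The plan is to exploit the explicit expansion of $e_0$ in terms of $F$. Rearranging the definition of $f_0$ and using $f_{n+1} = e_{n+1}$ gives
\[
e_0 = \beta\Bigl(f_0 - \sum_{n=0}^{\infty} 2^{-c_n/p}\, f_{n+1}\Bigr), \qquad \beta := (1-\gamma)^{-1/p}.
\]
With oracle $C$, membership in $C$ is decidable, so we can approximate $\gamma$ to arbitrary precision, and hence also the positive real $\beta$ (since $0 < 1-\gamma \leq 1$ is bounded away from $0$ by a $C$-computable quantity). Moreover, knowing $\gamma$ to precision $2^{-N}$ lets us detect, by enumerating $C$, an integer $M$ for which the tail $\sum_{n\geq M} 2^{-c_n}$ is as small as we like; we simply wait until the partial sum $\sum_{n<M} 2^{-c_n}$ comes within the desired precision of $\gamma$.

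Given $k$ as input, we first use $C$ to find $M$ with $\beta^p \sum_{n\geq M} 2^{-c_n} < 3^{-p}\cdot 2^{-pk}$, then a rational $\alpha_0$ with $|\alpha_0 - \beta| < 3^{-1}\cdot 2^{-k}$ and rationals $\beta_0,\dots,\beta_{M-1}$ with $|\beta_n - \beta\, 2^{-c_n/p}|^{p} < 3^{-p}\cdot 2^{-pk}/M$. All of these steps are algorithms relative to $C$. Form
\[
v = \alpha_0\, f_0 - \sum_{n=0}^{M-1} \beta_n\, f_{n+1} \in \mathcal{L}_{\Q(i)}(F).
\]
Expanding $v$ in the standard basis $E$, the difference $e_0 - v$ becomes a linear combination of the disjointly supported vectors $e_0,e_1,e_2,\dots$; its $p$th-power norm thus splits into three contributions: the $e_0$-term controlled by $|\beta - \alpha_0|$, the $e_1,\dots,e_M$ terms controlled by the $\beta_n$-approximations, and the tail over $e_{M+1},e_{M+2},\dots$ controlled by the choice of $M$. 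Each piece contributes less than $(2^{-k}/3)^p$, and a triangle-inequality bookkeeping gives $\|e_0-v\|_p < 2^{-k}$. Hence $C$ computes $e_0$ with respect to $F$.

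The main obstacle, modest as it is, is organizing the three sources of error so that the finite approximation both truncates the infinite sum and substitutes rational coefficients without the errors piling up; this is handled cleanly because after expansion in $E$ the three error contributions are supported on disjoint coordinate blocks and can therefore be bounded independently. The only genuinely oracle-dependent ingredient is the approximation of $\gamma$ and the effective choice of $M$, both of which become routine once membership in $C$ is decidable.
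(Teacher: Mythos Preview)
Your approach is essentially the paper's: use $C$ to approximate $\beta=(1-\gamma)^{-1/p}$ and to choose a truncation point for the series, then form a finite $F$-combination and bound $\|e_0-v\|_p^p$ by splitting along disjoint coordinate blocks. There is one bookkeeping slip worth fixing. After expanding $v$ in $E$, the coefficients on $e_1,\dots,e_M$ are $\alpha_0\,2^{-c_n/p}-\beta_n$, not $\beta\,2^{-c_n/p}-\beta_n$; with your stated condition $|\beta_n-\beta\,2^{-c_n/p}|^p<3^{-p}2^{-pk}/M$ the middle block therefore picks up an extra contribution of order $M\,|\alpha_0-\beta|^p$, which your choice of $\alpha_0$ does not control. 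The fix is immediate: either have $\beta_n$ approximate $\alpha_0\,2^{-c_n/p}$ (no oracle needed, since $\alpha_0$, $c_n$, $p$ are already in hand), or do as the paper does and factor out a single rational, writing $v=q_1\bigl(f_0-\sum_{n<M} r_n f_{n+1}\bigr)$ with $r_n$ a rational approximation to $2^{-c_n/p}$. Similarly, the tail term carries $|\alpha_0|^p$ rather than $\beta^p$; replacing $\beta^p$ by, say, $(\beta+1)^p$ in your choice of $M$ handles this.
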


\begin{proof}
Fix an integer $M$ so that $(1 - \gamma)^{-1/p} < M$.  

Let $k \in \N$.  Using oracle $C$, we can compute an integer $N_1$ so that $N_1 \geq 3$ and 
\[
\norm{ \sum_{n = N_1}^\infty 2^{-c_{n - 1}/p} e_n } \leq \frac{2^{-(kp + 1)/p}}{2^{-(kp + 1)/p} + M}.
\]
We can use oracle $C$ to compute a rational number $q_1$ so that $|q_1 - (1 - \gamma)^{-1/p}| \leq 2^{-(kp + 1)/p}$.  Set
\[
g = q_1 \left[ f_0 - \sum_{n = 1}^{N_1 - 1} 2^{-c_{n-1}/p} f_n \right].
\]
It suffices to show that $\norm{e_0 - g} < 2^{-k}$.  Note that since $1 - \gamma < 1$,\\ $|q_1(1 - \gamma)^{1/p} - 1| \leq 2^{-(kp + 1)/p}$.  Note also that $|q_1| < M + 2^{-(kp +1)/p}$.   Thus, 
\begin{eqnarray*}
\norm{e_0 - g}^p & = & \norm{e_0 - q_1(1 - \gamma)^{1/p} e_0 - q_1 \sum_{n = N_1}^\infty 2^{-c_{n - 1/p}}e_n}^p\\
& \leq & |q_1 (1 - \gamma)^{1/p} - 1|^p + |q_1|^p \norm{\sum_{n = N_1}^\infty 2^{-c_{n-1}/p} e_n}^p\\
& < & 2^{-(kp + 1)} + 2^{-(kp + 1)} = 2^{-kp}
\end{eqnarray*}
Thus, $\norm{e_0 - g} < 2^{-k}$.  This completes the proof of the lemma.
\end{proof}

\begin{lemma}\label{lm:C.computes.identity}
With respect to $(E,F)$, $C$ computes a surjective linear isometry of $\ell^p$.
\end{lemma}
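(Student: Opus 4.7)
The plan is to take the required surjective linear isometry of $\ell^p$ to be the identity map $T = \mathrm{id}_{\ell^p}$, and argue that this map is computable with respect to $(E, F)$ using oracle $C$. Since the identity is tautologically a surjective linear isometry, the lemma reduces to a pure computability statement about switching between the generating set $E$ (used in the domain) and $F$ (used in the range).

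The key observation is that $E$ and $F$ agree in every coordinate except the zeroth: $e_n = f_n$ for all $n \geq 1$, while $e_0$ must be expressed in terms of $F$ by the algorithm of Lemma \ref{lm:C.computes.e_0}. Hence any $\mathcal{L}_{\Q(i)}(E)$-vector $\sum_{j=0}^M \alpha_j e_j$ can be converted into an arbitrarily good $\mathcal{L}_{\Q(i)}(F)$-approximation using $C$: compute $\tilde e_0 \in \mathcal{L}_{\Q(i)}(F)$ with $\|e_0 - \tilde e_0\|_p$ as small as desired, and replace $\alpha_0 e_0$ by $\alpha_0 \tilde e_0$ while leaving the terms $\alpha_j f_j$ for $j \geq 1$ untouched.

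First I would spell out the algorithm $P$ witnessing computability of $T$ with respect to $(E,F)$. Given as input a rational ball $B(\sum_{j=0}^M \alpha_j e_j; r)$ with respect to $E$, use oracle $C$ to compute $\tilde e_0 \in \mathcal{L}_{\Q(i)}(F)$ with $\|e_0 - \tilde e_0\|_p < r / (4(|\alpha_0| + 1))$, set
\[
y = \alpha_0 \tilde e_0 + \sum_{j=1}^M \alpha_j f_j \in \mathcal{L}_{\Q(i)}(F),
\]
and output the $F$-rational ball $B(y; 2r)$. Next I would verify the three clauses of the definition of computability with respect to generating sets: Approximation is immediate; Correctness follows from $\|x - y\|_p < r/4$ for every $x$ in the input ball, so every such $x = T(x)$ lies in $B(y;2r)$; Convergence follows because the output radius is linear in the input radius, every $f \in \ell^p$ belongs to arbitrarily small $E$-rational balls, and we may shrink $r$ as needed to force $B(y;2r)$ inside any prescribed neighborhood of $f$.

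I do not anticipate a substantial obstacle: the essential computational work was already carried out in Lemma \ref{lm:C.computes.e_0}, and the rest is bookkeeping of rational error bounds. The only mild subtlety is arranging the constants so that the output rational ball genuinely contains the identity image of the input ball while still shrinking as the input ball shrinks, which is precisely what the choice $\|e_0 - \tilde e_0\|_p < r/(4(|\alpha_0|+1))$ together with output radius $2r$ is designed to guarantee.
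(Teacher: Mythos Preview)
Your proposal is correct and follows essentially the same approach as the paper: take $T$ to be the identity map, invoke Lemma~\ref{lm:C.computes.e_0} to compute $e_0$ with respect to $F$ using oracle $C$, and combine this with the trivial observation $e_n = f_n$ for $n \geq 1$ to conclude that $C$ computes $\{e_n\}_n$ with respect to $F$ and hence the identity with respect to $(E,F)$. The paper states this in two lines, whereas you explicitly unpack the algorithm $P$ and verify the Approximation, Correctness, and Convergence clauses; the extra detail is fine and the bookkeeping with the constant $r/(4(|\alpha_0|+1))$ and output radius $2r$ is correct.
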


\begin{proof}
By Lemma \ref{lm:C.computes.e_0}, $C$ computes $e_0$ with respect to $F$.  Thus, 
$C$ computes $\{e_n\}_{n = 0}^\infty$ with respect to $F$, and it follows that 
$C$ computes the identity map with respect to $(E, F)$. 
\end{proof}

\section{Additional results}\label{sec:additional}

Suppose $n$ is a positive integer and $1 \leq p < \infty$.  Define $\ell^p_n$ to be the 
set of all $f \in \ell^p_n$ so that $f(j) = 0$ whenever $j \geq n$; i.e. $\supp(f) \subseteq \{0, \ldots, n-1\}$.  
Thus, $\ell^p_n$ is a subspace of $\ell^p$, and $\{e_0, \ldots, e_{n-1}\}$ is an effective generating set for 
$\ell^p_n$.  Now, suppose $p$ is computable and $p \neq 2$.  Let $F$ be an effective generating set for $\ell^p_n$.  Via the methods of the previous section, we can show that there are disjointly supported unit vectors $f_1, \ldots, f_n \in \ell^p_n$ so that each $f_j$ is computable with respect to $F$.  Thus, 
$f_1, \ldots, f_n$ generate $\ell^p_n$.  It then follows that $\ell^p_n$ is computably categorical.  However, since $p \neq 2$, $\ell^p_n$ is not a Hilbert space.   Thus, \it there is a computably categorical Banach space that is 
not a Hilbert space.\rm 

\section{Conclusion}\label{sec:conclusion}

To summarize, we have investigated the complexity of isometries between computable copies of $\ell^p$.  We have shown that the halting set bounds the complexity of computing these isometries and that this bound is optimal.  Along the way we have strengthened an important inequality due to J. Lamperti.  
These results stand as a contribution to the emergent program of grafting computable structure theory onto computable analysis.  It is anticipated that there will be many other interesting discoveries in this area and that the proofs will present opportunities to blend methods from classical analysis and computability theory.

\section*{Acknowledgements}

I thank Joe Cima, Johanna Franklin, Xiang Huang, and Don Stull for helpful and inspiring conversation.  I also thank the anonymous referees for frankly sharing many helpful suggestions for improving the style of the paper.
This work was supported by a Simons Foundation Collaboration Grant for Math\-e\-ma\-ti\-cians.


\def\cprime{$'$}
\providecommand{\bysame}{\leavevmode\hbox to3em{\hrulefill}\thinspace}
\providecommand{\MR}{\relax\ifhmode\unskip\space\fi MR }
\providecommand{\MRhref}[2]{%
  \href{http://www.ams.org/mathscinet-getitem?mr=#1}{#2}
}
\providecommand{\href}[2]{#2}

\end{document}